\documentclass[leqno]{article}
\usepackage{beamerarticle} 
\usepackage{geometry}      \usepackage{amsthm}        

\usepackage[T2A]{fontenc}
\usepackage[utf8]{inputenc}
\usepackage[english,russian]{babel}

\ifdefined\isenglish\else\def\isenglish{1}\fi

\if\isenglish1
  \AtBeginDocument{\selectlanguage{english}}
\else
  \AtBeginDocument{\selectlanguage{russian}}
\fi

\ifdefined\whichmode\else\def\whichmode{1}\fi

\newcommand{\mytitle}{On the estimation of the Gromov-Hausdorff distance}

\newcommand{\myabstract}{

The paper investigates the Gromov--Hausdorff distance using a new tool --- the so-called $\varkappa$-traces of metric spaces. A general lower bound for the Gromov--Hausdorff distance is obtained in terms of $\varkappa$-traces. Using this bound, exact estimates for the distance between an arbitrary metric space and its Hausdorff hyperspace are derived, and the behavior of the metric on transfinite towers (inductive limits) of hyperspaces is studied. In the final part of the paper, the trace machinery is applied to $l^{p}$-products of metric spaces. A connection is established between computing the Gromov--Hausdorff distance for infinite $l^{\infty }$-products and partition calculus. Using methods from Ramsey theory (in particular, the Erd\"os--Rado theorem), $\varkappa$-traces for $l^{\infty }$-products are computed, and sufficient conditions are obtained under which the Gromov--Hausdorff distance between infinite Cartesian products can be computed exactly. The final section also provides a general estimate for the Gromov–Hausdorff distance between \(l^{\infty }\)-products of metric spaces using the Erd\"os--Rado theorem.

Bibliography: 21 titles.
}

\newcommand{\myname}{E.~Abdullaev}

\newcommand{\mydate}{\ifnum\day<10 0\fi\the\day.\ifnum\month<10 0\fi\the\month.\the\year}

\usepackage{mathtools} \usepackage{amsfonts,amssymb,mathrsfs,amscd}
\usepackage{stmaryrd}

\usepackage{graphicx}
\usepackage{verbatim}
\usepackage{url}
\usepackage{alltt}
\usepackage{indentfirst}
\usepackage{qrcode}

\theoremstyle{plain}

\newtheorem{thm}{Theorem}[section]
\newtheorem{prop}[thm]{Proposition}
\newtheorem{cor}[thm]{Corollary}
\newtheorem{ass}[thm]{Assertion}
\newtheorem{lem}[thm]{Lemma}

\theoremstyle{definition}

\newtheorem{examp}[thm]{Example}

\newtheorem{dfn}[thm]{Definition}
\newtheorem{rk}[thm]{Remark}

\newtheorem*{agree}{Convention}
\newtheorem*{notation}{Notation}
\newtheorem*{quest}{Question}

\newcommand{\e}{\varepsilon}
\newcommand{\vk}{\varkappa}

\newcommand{\cGH}{\mathcal{GH}}

\newcommand{\cH}{\mathcal{H}}
\newcommand{\cR}{\mathcal{R}}
\newcommand{\cM}{\mathcal{M}}

\newcommand{\N}{\mathbb{N}}

\newcommand{\R}{\mathbb{R}}
\newcommand{\Z}{\mathbb{Z}}

\newcommand{\diam}{{\operatorname{diam}}}
\newcommand{\dis}{{\operatorname{dis}}}
\newcommand{\codis}{{\operatorname{codis}}}
\newcommand{\tr}{{\operatorname{tr}}}
\newcommand{\Tr}{{\operatorname{Tr}}}
\newcommand{\dens}{{\operatorname{dens}}}

\renewcommand{\:}{\colon}
\newcommand{\rom}[1]{{\em #1}}

\makeatletter
\@ifundefined{RBibitem}{
\def\RBibitem#1{\normalfont\bibitem{#1}\ignorespaces}

\def\by{\unskip\normalfont~}
  \def\paper{\unskip\normalfont. }
  \def\jour{\unskip\normalfont, \itshape }
  \def\publaddr{\unskip\normalfont, }
  \def\publ{\unskip\normalfont, }
  \def\pages{\unskip\normalfont, pp.~}

\def\inbook{\unskip\normalfont\ \itshape }
  \def\eds{\unskip\normalfont, ed. by }
  \def\book{\unskip\normalfont. \itshape }
  \def\issue{\unskip\normalfont, no.~}

\def\yr{\unskip, \normalfont}
  \def\vol{\unskip, \normalfont\bfseries}

  \def\bysame{\by\hbox to3em{\hrulefill}\thinspace\kern\z@}
  \def\preprint{\unskip\normalfont, preprint }

  \def\finalinfo{\unskip\normalfont, }
  \def\@@finalinfo{\finalinfo}

}{}
\makeatother

\usepackage{hyperref}      
\begin{document}
\title{\mytitle}
\author{\myname}
\date{\today}

\maketitle

\begin{abstract}
\myabstract
\end{abstract}

\ifnum\whichmode=1
\tableofcontents
\fi

\section*{Introduction}

The concept of distance between metric spaces, introduced by F.~Hausdorff \cite{Hausdorff} and generalized in the papers of D.~Edwards~\cite{Edwards} and M.~Gromov \cite{Gromov}, is one of the fundamental tools of modern metric geometry.
This theory was developed further in studies that take into account additional structures on metric spaces and also examine properties of the Gromov--Hausdorff distance itself (see, for example,~\cite{BurBurIva, IvaNikTuz, NP01}).
An important stage in this area was the work of S.~A.~Bogatyy and A.~A.~Tuzhilin~\cite{BogatyTuzhilin}, as well as A.~O.~Ivanov and A.~A.~Tuzhilin~\cite{IvaTuzSimplex, GrigIvaTuz, IvaTuzBorsuk}, devoted to computing the Gromov--Hausdorff distance to simplices and to studying its fundamental properties.

Despite its theoretical importance, the exact computation of the Gromov--Hausdorff distance is an extremely hard algorithmic problem even for finite metric spaces~\cite{NP01, Schmiedl2017}.

The computation and estimation of the Gromov--Hausdorff distance for various classes of metric spaces have been studied in detail in many papers. For instance, in \cite{Adams, LimMemoly} bounds for spheres were obtained, and in \cite{IvaTuzSimplex, GrigIvaTuz} methods were developed that made it possible to compute the distances for a wide class of objects, including simplices and ultrametric spaces. In \cite{IvaTuzBorsuk} sufficient conditions for the attainability of the upper bound are given by means of the Borsuk number (that is, in the case where the Gromov--Hausdorff distance between two metric spaces equals half the maximum of their diameters). Further examples are the computations of the Gromov--Hausdorff distance between a segment and a circle \cite{JiTuzhilin} and between $\Z^2$ and $\R^2$ with the Euclidean metric \cite{MikhAsymptDim}. In~\cite{Me_GH_Product_2026} two-sided bounds were obtained for the Gromov--Hausdorff distance between $l^p$-products of metric spaces.

For this reason, efficiently computable metric invariants that allow one to estimate the Gromov--Hausdorff distance are of particular interest.

In~\cite{Breugel2004} it is proved that for any bounded metric space $X$ its Hausdorff hyperspace $\cH(X)$ is never isometric to the original space $X$, except in the case where $X$ consists of a single point.
However, this non-isometry result leaves open the question of whether the Gromov--Hausdorff distance $d_{GH}(X, \cH(X))$ can vanish.
Note also that the answer to this question is not a trivial consequence of the non-isometry of these spaces. It is known that, on the class of bounded spaces, the equality $d_{GH}(X, Y) = 0$, generally speaking, does not guarantee the isometry of the spaces $X$ and $Y$.
A classical counterexample is the interval $(0, 1)$ and the segment $[0, 1]$, whose Gromov--Hausdorff distance is zero.
Thus, the fact that $d_{GH}(X, \cH(X)) > 0$ requires proof.

In classical metric geometry, packing numbers and metric entropy are widely used to describe the capacity of spaces.
In the present paper we propose an approach dual to packing numbers, namely the concept of $\vk$-traces of a metric space.
In Section 2 we formalize this notion and prove a general lower bound for the Gromov--Hausdorff distance in terms of the difference of $\vk$-traces (Theorem~\ref{thm:main}). This tool enables us to prove the strict positivity of the distance between a wide class of spaces and their Hausdorff hyperspaces.
In Section 3 this technique is extended to transfinite towers of hyperspaces, where we prove that at some (quite specific) level the Gromov--Hausdorff distance inevitably accumulates up to the diameter of the original space (Theorem~\ref{thm:dens_plus1}).
Finally, in Section 4 the trace machinery is applied to the problem of estimating the Gromov--Hausdorff distance between $l^p$-products of metric spaces.
We show that the exact computation of traces for Cartesian products with the $l^\infty$-metric runs into set-theoretic obstructions related to colorings of complete graphs.
Using classical results of Ramsey theory and the Erd\H{o}s--Rado theorem on the partition calculus, we obtain exact values of the $\vk$-traces for infinite products and derive sufficient conditions for the exact computation of the Gromov--Hausdorff distance between them (Theorem~\ref{thm:Calculation_Of_GH_between_products_by_Tr}).
In the very last subsection (4.3) we obtain a general lower bound for the Gromov--Hausdorff distance between $l^{\infty}$-products of metric spaces, using only the Erd\H{o}s--Rado theorem (without using the $\vk$-trace machinery).

\subsection*{Acknowledgments}
\begin{sloppypar}
The author expresses gratitude to his scientific advisor, Doctor of Physical and Mathematical Sciences, Professor \mbox{A.~A.~Tuzhilin}, and Doctor of Physical and Mathematical Sciences, Professor \mbox{A.~O.~Ivanov}, for formulating the problem and their constant attention to the work.
\end{sloppypar}

\section{Main definitions: Gromov--Hausdorff distance}

Let $\cGH$ be the class of all nonempty metric spaces, $\cM \subset \cGH$ the class of all nonempty metric compacta.
In what follows we always consider only nonempty metric spaces.
For arbitrary $X, Y \in \cGH$ let $d_X$ and $d_Y$ denote their metrics.
By a \emph{correspondence} between $X$ and $Y$ we mean a subset $R \subset X \times Y$ whose projections to $X$ and $Y$ are surjective. We denote the set of all correspondences between $X$ and $Y$ by $\cR(X, Y)$.
\emph{The distortion} of a correspondence $R \in \cR(X, Y)$ is defined as the quantity
$$ \dis(R) = \sup \Bigl\{ \bigl|d_X(x, x') - d_Y(y, y')\bigr| \: (x, y), (x', y') \in R \Bigr\}.$$
Then the Gromov--Hausdorff distance is defined as
$$d_{GH}(X, Y) = \frac{1}{2} \inf \bigl\{ \dis(R) \mid R \in \cR(X, Y) \bigr\}. $$
In what follows, when speaking about properties of the Gromov--Hausdorff distance, we follow \cite{BurBurIva}, \cite{BogatyTuzhilin}.
For an arbitrary metric space $(X, d_X)$ we define the \emph{diameter}
$$\diam(X) = \sup_{x, \tilde{x} \in X} d_{X}(x, \tilde{x}).$$
For any bounded metric spaces $X, Y$ the following estimate holds:
$$ \frac{1}{2} \bigl|\diam(X) - \diam(Y)\bigr| \le d_{GH}(X, Y) \le \frac{1}{2} \max\bigl\{\diam(X), \diam(Y)\bigr\}. $$
Let $\{pt\}$ denote the metric space consisting of a single point. Then
for any metric space $X$ the equality holds
$$d_{GH}(X, \{pt\}) = \frac{1}{2} \diam(X).$$
For any $X, Y \in \cGH$ and arbitrary maps $f \: X \to Y$ and $g \: Y \to X$
we set
$$R_{f, g} = \Bigl\{ \bigl(x, f(x)\bigr) \ \big| \ x \in X\Bigr\} \cup \Bigl\{ \bigl(g(y), y\bigr) \ \big| \ y \in Y \Bigr\},$$ then $R_{f, g}\in \cR(X, Y)$.
By the distortion of a map $f \: X \to Y$, where $X, Y \in \cGH$, we mean the distortion of its graph, that is,
$$\dis(f) = \sup_{x, \tilde{x} \in X} \Bigl| d_{X}(x, \tilde{x}) - d_{Y}\bigl(f(x), f(\tilde{x})\bigr)\Bigr|.$$
For a pair of maps $f \: X \to Y$ and $g \: Y \to X$ we introduce their \emph{co-distortion}\/:
$$\codis(f, g) = \sup_{x \in X, \ y \in Y} \Bigl| d_{X}\bigl(x, g(y)\bigr) - d_{Y}\bigl(y, f(x)\bigr)\Bigr|.$$
\begin{prop}[\cite{BogatyTuzhilin}]
For any $X, Y \in \cGH$ and arbitrary maps $f \: X \to Y$ and $g \: Y \to X$
 the following holds
$$\dis(R_{f, g}) = \max\bigl\{\dis(f), \dis(g), \codis(f, g)\bigr\}.$$
\end{prop}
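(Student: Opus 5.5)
The plan is to unfold the supremum in the definition of $\dis(R_{f,g})$ according to where the two pairs come from. Each pair $(x,y)\in R_{f,g}$ has the form $\bigl(x,f(x)\bigr)$ with $x\in X$, or $\bigl(g(y),y\bigr)$ with $y\in Y$, or possibly both. For two pairs $p,p'\in R_{f,g}$ this gives four combinations, which by the symmetry of the absolute value collapse to three types. Because every element of $R_{f,g}$ has at least one of these forms, the supremum over all pairs equals the maximum of the three suprema taken over the types.

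The three types are computed directly.
\begin{itemize}
\item If $p=\bigl(x,f(x)\bigr)$ and $p'=\bigl(\tilde x,f(\tilde x)\bigr)$, the quantity is $\bigl|d_X(x,\tilde x)-d_Y\bigl(f(x),f(\tilde x)\bigr)\bigr|$. Its supremum over $x,\tilde x\in X$ is exactly $\dis(f)$.
\item If $p=\bigl(g(y),y\bigr)$ and $p'=\bigl(g(\tilde y),\tilde y\bigr)$, the supremum over $y,\tilde y\in Y$ is $\dis(g)$ in the same way.
\item In the mixed case $p=\bigl(x,f(x)\bigr)$, $p'=\bigl(g(y),y\bigr)$, the quantity is $\bigl|d_X\bigl(x,g(y)\bigr)-d_Y\bigl(f(x),y\bigr)\bigr|$. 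Its supremum over $x\in X$, $y\in Y$ is $\codis(f,g)$.
\end{itemize}
The reversed mixed order gives the same expression, since the metrics are symmetric.

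Together these give both inequalities. For $\le$: every term in the supremum defining $\dis(R_{f,g})$ belongs to one of the three families, so it is bounded by $\max\{\dis(f),\dis(g),\codis(f,g)\}$. For $\ge$: each of the three suprema runs over a subfamily of the pairs in $R_{f,g}$, so each is bounded by $\dis(R_{f,g})$. Both inequalities remain valid when some of the values are $+\infty$.

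There is no real obstacle here. The only point that needs care is that the decomposition covers all pairs even when a pair has both forms, e.g.\ $\bigl(x,f(x)\bigr)=\bigl(g(y),y\bigr)$. This is harmless, since such a pair is simply counted in more than one family.
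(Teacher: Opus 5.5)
Your proof is correct: splitting the pairs of points of $R_{f,g}$ into the three types (both from the graph of $f$, both from the graph of $g$, or mixed) and using that the supremum over a union equals the maximum of the suprema gives both inequalities at once, including when some values are infinite. The paper does not prove this statement itself but quotes it from Bogatyy--Tuzhilin, and the standard argument for it is this same direct case analysis.
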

We shall also need the following equivalent definition, which follows from the preceding proposition:
\begin{prop}\label{prop:def}
For any $X, Y \in \cGH$ the following holds
$$d_{GH}(X, Y) = \frac{1}{2} \inf_{\substack{f \: X \to Y \\ g \: Y \to X}} \max \bigl\{ \dis(f), \dis(g), \codis(f, g)\bigr\},$$
where the infimum is taken over all maps $f$ and $g$.
\end{prop}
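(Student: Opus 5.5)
To prove Proposition~\ref{prop:def}, I will show that the infimum of $\dis(R)$ over $R \in \cR(X,Y)$ equals the infimum of $\max\{\dis(f),\dis(g),\codis(f,g)\}$ over all pairs of maps $f\colon X\to Y$, $g\colon Y\to X$. Each inequality will be proved separately, using the preceding proposition of Bogatyy--Tuzhilin, which gives $\dis(R_{f,g}) = \max\{\dis(f),\dis(g),\codis(f,g)\}$.

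First, the inequality ``$\le$'' is immediate. For any pair $f,g$ the set $R_{f,g}$ is a correspondence, as noted before the proposition. Hence
\[
\inf_{R} \dis(R) \le \dis(R_{f,g}) = \max\{\dis(f),\dis(g),\codis(f,g)\},
\]
and we take the infimum over $f,g$.

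For ``$\ge$'', I fix an arbitrary $R \in \cR(X,Y)$ and produce maps $f,g$ with $\dis(R_{f,g}) \le \dis(R)$. Since the projections of $R$ are surjective, for each $x\in X$ I choose $f(x)\in Y$ with $(x,f(x))\in R$. Similarly, for each $y \in Y$ I choose $g(y)$ with $(g(y),y)\in R$. This uses the axiom of choice, which is harmless here since $X,Y$ are arbitrary sets. Then $R_{f,g}\subset R$. Distortion is a supremum over pairs of elements of the correspondence, so it is monotone under inclusion, and $\dis(R_{f,g}) \le \dis(R)$. Applying the preceding proposition and taking the infimum over $R$ gives the reverse inequality. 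Multiplying both sides by $\tfrac12$ yields the stated formula.

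There is no serious obstacle. The only points to check are the monotonicity of $\dis$ under inclusion of correspondences and the use of choice to select $f$ and $g$. The equality also holds in the case where the infima are $+\infty$ (unbounded spaces), since both inequalities remain valid in $[0,+\infty]$.
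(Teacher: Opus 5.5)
Your proof is correct and matches the paper's approach: the paper proves nothing beyond saying the formula ``follows from the preceding proposition,'' and your two inequalities are exactly that derivation. One direction uses that $R_{f,g}$ is a correspondence; the other chooses $f,g$ inside a given $R$ so that $R_{f,g}\subset R$, and then applies $\dis(R_{f,g})=\max\{\dis(f),\dis(g),\codis(f,g)\}$.
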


\section{Traces of metric spaces}

In metric geometry, to estimate the ``capacity'' of a metric space $X$ one uses packing numbers, equal to the maximum cardinality of an $r$-separated subset in $X$.
In this paper we propose a dual approach: we fix the cardinality of the subset (the cardinal $\vk$) and look for the maximum distance $r$, by which one can ``spread apart'' $\vk$ points inside $X$.
This quantity, which we call the $\vk$-trace, is an invariant of the metric space and, as will be shown below, estimates the Gromov--Hausdorff distance in a natural way.

\begin{notation}
For any set $X$, let $\# X$ denote its cardinality.
\end{notation}

\subsection{Definitions and properties}

\begin{dfn}
A subset $Y \subset X$ of a metric space $(X, d_{X})$ is called $r$-separated if for any two points $x \ne y \in Y$ the inequality $d_{X}(x, y) \ge r$.
\end{dfn}

\begin{dfn}
Let $(X, d)$ be a metric space and $\vk$ be an arbitrary cardinal, with $\vk \ge 2$.
We call the $\vk$-\emph{trace} of $X$ the number
$$
\Tr_X(\vk) = \sup \bigl\{ r > 0 : \text{there exists an $r$-separated subset } Q_{r} \subset X \text{ of cardinality } \#Q_{r} = \vk \bigr\}.
$$
By definition, we assume that if no $r$-separated subsets of the given cardinality exist, then $\Tr_{X}(\vk) = 0$.
We also assume by definition that the supremum of a set unbounded above equals infinity.
\end{dfn}

\begin{dfn}
Let $(X, d)$ be a metric space and $\vk$ an arbitrary cardinal, with $\vk \ge 2$.
We call the (\emph{bounded}) $\vk$-\emph{trace} of $X$ the number
$$
\tr_{X}(\vk) = \sup \Bigl\{
r > 0 \;\Big|\;
\begin{array}{l}
\text{there exists a bounded } r\text{-separated} \\
\text{subset } Q_{r} \subset X \text{ of cardinality } \#Q_{r} = \vk
\end{array}
\Bigr\}.
$$
\end{dfn}

\begin{examp}
Let $\Delta_n(a)$ be the discrete metric space of $n$ points (where $n$ is an arbitrary cardinal) whose pairwise distances equal $a > 0$. Then the definitions show that for every cardinal $\vk \ge 2$ the following holds:
$$ \tr_{\Delta_n(a)}(\vk) =
\begin{cases}
a, & \text{if } \vk \le n, \\
0, & \text{if } \vk > n.
\end{cases} $$
\end{examp}

\begin{examp}[Trace of a segment]\label{examp:01}
Consider $X = [0, 1]$ with the natural metric $d_{X}(x, y) = |x - y|$.

For every positive integer $n \ge 2$ we obtain
$$\tr_{X}(n) = \frac{1}{n - 1},$$
where the ($1/(n-1)$)-separated sets from the definition of the bounded trace are to be taken as
$$Q_{1/(n-1)} = \biggl\{ 0, \ \frac{1}{n - 1}, \ \frac{2}{n - 1}, \ \dots, \ 1 \biggr\},$$
while for every infinite cardinal $\vk$ we have $\tr_{X}(\vk) = 0$.
\end{examp}

\begin{examp}[Trace of a circle] \label{examp:S1}
Consider the circle with the intrinsic metric:
$$X = S^{1} := \bigl\{ (x, y) \in \R^{2} \mid x^{2} + y^{2} = 1\bigr\}.$$

For every positive integer $n \ge 2$ we obtain
$$\tr_{X}(n) = \frac{2\pi}{n},$$
where the $2\pi/n$-separated sets from the definition of the bounded trace are to be taken as the vertices of a regular $n$-gon
$$Q_{2\pi/n} = \Biggl\{ \biggl( \cos\Bigl(\frac{2 \pi k}{n}\Bigr), \ \sin\Bigl(\frac{2 \pi k}{n}\Bigr) \biggr) \ \bigg| \ k \in \{0, \dots, n - 1\} \Biggr\}.$$
\end{examp}

\begin{notation}
For any metric space $(X, d_{X})$ and any $\lambda > 0$, let $\lambda X$ denote the metric space $(X, d_{\lambda X})$, in which, for any $x, y \in X$, by definition
$$d_{\lambda X}(x, y) = \lambda d_{X}(x, y).$$
\end{notation}

\begin{dfn}[\cite{Engelking}]
The density of a topological space $X$ is the least cardinality of dense subsets of $X$; it is denoted by $\dens(X)$.
\end{dfn}

Note that the density of a space does not exceed its cardinality: $\dens(X) \le \# X$.

The following properties follow immediately from these definitions.

\begin{prop}\label{prop:properties}
Let $X$ be a metric space, and let $\vk \ge 2$ be a cardinal.
Then the following holds.
\begin{enumerate}
\item \label{prop:properties:1}
We have $0 \le \tr_{X}(\vk) \le \Tr_{X}(\vk) \le \diam(X)$.
\item \label{prop:properties:2}
If $\vk$ is finite, then $\tr_{X}(\vk) = \Tr_{X}(\vk)$.
\item \label{prop:properties:3}
In the case $\vk = 2$ the trace coincides with the diameter of the space:
$$\tr_{X}(2) = \Tr_{X}(2) = \diam(X).$$
\item \label{prop:properties:4}
If $X$ is a bounded metric space, then
$\tr_{X}(\vk) = \Tr_{X}(\vk)$.
\item \label{prop:properties:5}
Let $2 \le \vk_1 \le \vk_2$ be cardinals. Then
$\tr_{X}(\vk_{1}) \ge \tr_{X}(\vk_{2})$ and $\Tr_{X}(\vk_{1}) \ge \Tr_{X}(\vk_{2})$.
\item \label{prop:properties:6}
Let $A \subset X$ be an arbitrary subspace.
Then for every cardinal $\vk \ge 2$ we have
$\tr_{A}(\vk) \le \tr_{X}(\vk)$ and $\Tr_{A}(\vk) \le \Tr_{X}(\vk)$.
\item \label{prop:properties:7}
For every $\lambda > 0$ we have $\tr_{\lambda X}(\vk) = \lambda \tr_{X}(\vk)$ and $\Tr_{\lambda X}(\vk) = \lambda \Tr_{X}(\vk)$.
\item \label{prop:properties:8}
Let $A \subset X$ be a dense subset. Then
$\tr_{A}(\vk) = \tr_{X}(\vk)$ and $\Tr_{A}(\vk) = \Tr_{X}(\vk)$.
\item \label{prop:properties:9}
If $\vk > \dens(X)$, then $\tr_{X}(\vk) = \Tr_{X}(\vk) = 0$.
\end{enumerate}
\end{prop}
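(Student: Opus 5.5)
Almost all of the items follow directly from the definitions. I will organize the proof around one observation: both traces are suprema over families of \emph{witnessing sets}, that is, $r$-separated subsets of cardinality $\vk$ (bounded ones in the case of $\tr$). Each item then reduces to comparing witnessing families, or to transporting witnesses from one space to another. Throughout I use the conventions $\sup\emptyset = 0$ and "unbounded $\Rightarrow \infty$".

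For item~\ref{prop:properties:1}, every bounded witness is a witness, so $\tr_X(\vk)\le\Tr_X(\vk)$. Any witness contains two distinct points, since $\vk\ge2$, so $r\le d(x,y)\le\diam X$. Items~\ref{prop:properties:2} and~\ref{prop:properties:4} hold because every finite subset is bounded, and every subset of a bounded space is bounded; hence the two witness families coincide.

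For item~\ref{prop:properties:3}, I use item~\ref{prop:properties:2} to get $\tr=\Tr$. A two-point set $\{x,y\}$ is $d(x,y)$-separated, so the supremum is at least $\sup d(x,y)=\diam X$. Combined with item~\ref{prop:properties:1} this gives equality, including the case $\diam X=\infty$.

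Items~\ref{prop:properties:5}, \ref{prop:properties:6} and~\ref{prop:properties:7} are transport arguments. Any subset of cardinality $\vk_1$ of an $r$-separated set of cardinality $\vk_2\ge\vk_1$ is still $r$-separated, and it is still bounded if the larger set was. A witness in $A$ is a witness in $X$, and boundedness is preserved. Finally, scaling by $\lambda$ maps $r$-separated sets bijectively onto $\lambda r$-separated sets and preserves boundedness.

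Item~\ref{prop:properties:8} is the only step requiring an approximation. The inequality "$\le$" is item~\ref{prop:properties:6}. For "$\ge$", take an $r$-separated $Q\subset X$ with $\#Q=\vk$, and fix $0<\e<r/2$. For each $q\in Q$ choose $a_q\in A$ with $d(q,a_q)<\e$. For $q\ne q'$ we get $d(a_q,a_{q'})>r-2\e>0$, so the map $q\mapsto a_q$ is injective. The image therefore has cardinality $\vk$ and is $(r-2\e)$-separated. It is bounded whenever $Q$ is, with diameter at most $\diam Q+2\e$. Letting $\e\to0$ and taking the supremum over $r$ gives the reverse inequalities for both $\tr$ and $\Tr$. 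This step needs one piece of care: the case of an infinite supremum is handled the same way, since arbitrarily large $r$ give arbitrarily large $r-2\e$.

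For item~\ref{prop:properties:9}, let $D$ be dense with $\#D=\dens X<\vk$. By item~\ref{prop:properties:8}, $\Tr_X(\vk)=\Tr_D(\vk)$. The space $D$ has no subset of cardinality $\vk$ at all, so $\Tr_D(\vk)=0$. Then $\tr_X(\vk)=0$ as well, by item~\ref{prop:properties:1}.
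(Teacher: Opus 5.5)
Your proof is correct and follows the paper's approach: items 1--7 are read off directly from the definitions, and item 8 uses the same $\e$-approximation by points of the dense set, with the injectivity of $q \mapsto a_q$. The only difference is in item 9, where you deduce the claim from item 8 applied to a dense subset $D$ with $\#D = \dens(X) < \vk$ (such a $D$ has no subsets of cardinality $\vk$), whereas the paper argues directly that the pairwise disjoint balls $B(q, r/3)$, $q \in Q$, must each meet every dense set; the underlying injectivity mechanism is the same, and your reduction simply avoids repeating it.
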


\begin{proof}
Items~\ref{prop:properties:1}--\ref{prop:properties:7} follow immediately from the definitions. We prove items~\ref{prop:properties:8} and~\ref{prop:properties:9}.

~(\ref{prop:properties:8})
The inequality $\tr_A(\vk) \le \tr_X(\vk)$ follows from item~\ref{prop:properties:6}.
We prove the reverse inequality.
Let $r < \tr_X(\vk)$.
There exists a bounded $r$-separated set $Q \subset X$ of cardinality $\vk$.
Choose an arbitrary $\e < r/2$.
Since the set $A$ is dense in $X$, for every point $q \in Q$ there is a point $a_q \in A$ such that $d_X(q, a_q) < \e$.
For any distinct $q, q' \in Q$, by the triangle inequality we have:
$$d_X(a_q, a_{q'}) \ge d_X(q, q') - d_X(q, a_q) - d_X(q', a_{q'}) > r - 2\e.$$
Consequently, the map $q \mapsto a_q$ is injective, and the set $\{a_q\}_{q \in Q} \subset A$ has cardinality $\vk$, is bounded and $(r-2\e)$-separated.
Hence $\tr_A(\vk) \ge r - 2\e$.
Letting $\e \to 0$ and $r \to \tr_X(\vk)$, we obtain $\tr_A(\vk) \ge \tr_X(\vk)$.
The case of the trace $\Tr$ is treated similarly.

~(\ref{prop:properties:9})
Suppose that $\vk > \dens(X)$, but $\tr_X(\vk) > 0$.
Then there exists $r > 0$ such that in $X$ there is an $r$-separated set $Q$ of cardinality $\vk$.
Consider the family of open balls $\{B(q, r/3)\}_{q \in Q}$.
Because of the $r$-separability of the set $Q$, these balls are pairwise disjoint. Since every dense set must meet every nonempty open set, the cardinality of any dense set cannot be less than the cardinality of a family of disjoint balls, that is, not less than $\#Q = \vk$.
Consequently, $\dens(X) \ge \vk$, which contradicts the assumption.
Hence $\tr_X(\vk) = 0$.
The same argument applies to the trace $\Tr$.
\end{proof}

\subsection{General estimate of the Gromov--Hausdorff distance}

\begin{dfn}
Let $X, Y$ be metric spaces, $f \: X \to Y$ an arbitrary map. Then $f$ is called bounded if it takes bounded sets to bounded sets.
\end{dfn}

\begin{ass}\label{ass:GH_dfn}
For any $X, Y \in \cGH$ we have
$$d_{GH}(X, Y) = \frac{1}{2} \inf_{\substack{f \: X \to Y \\ g \: Y \to X}} \max \bigl\{ \dis(f), \dis(g), \codis(f, g)\bigr\},$$
where the infimum is taken over all bounded maps $f$ and $g$.
\end{ass}

\begin{proof}
Indeed, if $d_{GH}(X, Y) = \infty$, there is nothing to prove.
Therefore, without loss of generality, we assume that $d_{GH}(X, Y) < \infty$.
In this case the assertion follows immediately from Proposition~\ref{prop:def} and from the fact that any map $f$, for which $\dis(f) < \infty$, is bounded.
\end{proof}

\begin{agree}
Here and below the difference $\bigl| a - b \bigr|$ is understood in the extended sense: if exactly one of the values is infinite, the absolute value of the difference is set equal to $\infty$, and if both values are infinite, their difference is considered equal to $0$.
We shall also consider the difference $a - b$, where $a \ge b$, and likewise understand it in the extended sense: if $a = \infty$ and $b < \infty$, then $a - b := \infty$, and if $a = b = \infty$, then $a - b := 0$.
\end{agree}

\begin{thm}\label{thm:main}
Let $X, Y$ be metric spaces. Then
$$2d_{GH}(X, Y) \ge \sup_{\vk \ge 2} \bigl| \tr_{X}(\vk) - \tr_{Y}(\vk) \bigr|,$$
where the supremum is taken over all cardinals $\vk \ge 2$.
\end{thm}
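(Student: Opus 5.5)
The plan is to work with a pair of bounded maps from Assertion~\ref{ass:GH_dfn}; in fact only one map is needed for each direction of the inequality. If $d_{GH}(X,Y)=\infty$ there is nothing to prove, so assume it is finite. Fix a cardinal $\vk \ge 2$. By symmetry it suffices to show that for every bounded map $f\: X\to Y$ with $\dis(f)<\infty$,
$$\tr_Y(\vk) \ge \tr_X(\vk) - \dis(f),$$
in the extended sense of the Convention. The symmetric bound, with $g$ and $\tr_X(\vk)\ge \tr_Y(\vk)-\dis(g)$, then follows in the same way. Since $\max\{\dis(f),\dis(g),\codis(f,g)\}$ dominates both $\dis(f)$ and $\dis(g)$, taking the infimum over pairs $(f,g)$ and then the supremum over $\vk$ yields the theorem.

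To prove the one-sided bound, take $r<\tr_X(\vk)$ with $r>\dis(f)$; if no such $r$ exists, then $\tr_X(\vk)-\dis(f)\le 0$ and there is nothing to show. Choose a bounded $r$-separated set $Q\subset X$ with $\#Q=\vk$. For distinct $q,q'\in Q$ we have
$$d_Y\bigl(f(q),f(q')\bigr)\ge d_X(q,q')-\dis(f)\ge r-\dis(f)>0.$$
Hence $f|_Q$ is injective, so $f(Q)$ has cardinality exactly $\vk$ and is $(r-\dis(f))$-separated. It is also bounded, because $f$ is a bounded map. This is exactly where Assertion~\ref{ass:GH_dfn} is needed, and alternatively $\diam f(Q)\le \diam Q+\dis(f)$ gives boundedness directly. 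Therefore $\tr_Y(\vk)\ge r-\dis(f)$, and letting $r\to\tr_X(\vk)$ gives the claim.

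The main obstacle is bookkeeping with infinities. If $\tr_X(\vk)=\infty$, the argument above gives $\tr_Y(\vk)=\infty$, because $\dis(f)$ is finite. So either both traces are infinite, and their difference is $0$ by the Convention, or both are finite. Traces are in fact at most the diameter only for bounded sets, but bounded $r$-separated sets of size $\vk$ can exist for arbitrarily large $r$ in an unbounded space, so the infinite case does genuinely occur and must be handled this way. The one-sided bounds then combine to give $|\tr_X(\vk)-\tr_Y(\vk)|\le \max\{\dis(f),\dis(g)\}$ for every admissible pair, which completes the argument.
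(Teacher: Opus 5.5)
Your proposal is correct and follows essentially the paper's argument: you push a bounded $r$-separated set of cardinality $\vk$ through a single map of small distortion, and its image is still bounded, of cardinality $\vk$, and $(r-\dis(f))$-separated. The direct one-sided form $\tr_Y(\vk)\ge\tr_X(\vk)-\dis(f)$, restricted to $r>\dis(f)$, streamlines the paper's case analysis: the non-injective case and the separate treatment of $\tr_Y(\vk)=\infty$ become automatic.
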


\begin{proof}
Without loss of generality we assume that $d_{GH}(X, Y) < \infty$, since otherwise there is nothing to prove.
Consider an arbitrary cardinal $\vk \ge 2$.
It suffices to prove that $2d_{GH}(X, Y) \ge \bigl| \tr_{X}(\vk) - \tr_{Y}(\vk) \bigr|$.
To this end, without loss of generality, we may assume that $\tr_{X}(\vk) < \tr_{Y}(\vk)$ (the case $\tr_{X}(\vk) = \tr_{Y}(\vk)$ is trivial). We consider the following cases.
\begin{enumerate}
\item \label{thm:main:proof1}
Let $0 \le \tr_{X}(\vk) < \tr_{Y}(\vk) < \infty$.

For any $\e > 0$ less than $\bigl(\tr_{Y}(\vk) - \tr_{X}(\vk)) / 3$ and not exceeding $\tr_{X}(\vk)$, consider a bounded $\bigl(\tr_{Y}(\vk) - \e \bigr)$-separated set $Q_{Y, \e} \subset Y$ of cardinality $\#Q_{Y, \e} = \vk$.

Consider an arbitrary bounded map $g \: Y \to X$. Then
$$
\dis(g) = \sup_{y, \tilde{y} \in Y} \Bigl| d_{Y}(y, \tilde{y}) - d_{X}\bigl(g(y), g(\tilde{y})\bigr) \Bigr| \ge
\sup_{y, \tilde{y} \in Q_{Y, \e}} \Bigl| d_{Y}(y, \tilde{y}) - d_{X}\bigl(g(y), g(\tilde{y})\bigr) \Bigr|.
$$
If the restriction $g|_{Q_{Y, \e}} \: Q_{Y, \e} \to g(Q_{Y, \e})$ is not injective, then $\dis(g) \ge \tr_{Y}(\vk) - \e \ge \bigl| \tr_{X}(\vk) - \tr_{Y}(\vk)\bigr|$ by virtue of $\e \le \tr_{X}(\vk)$, while for $\tr_{X}(\vk) = 0$ the inequality $\dis(g) \ge \tr_{Y}(\vk)$ is obtained by passing to the limit as $\e \to 0$; in both cases there is nothing to prove.

Further, suppose that $g|_{Q_{Y, \e}} \: Q_{Y, \e} \to g(Q_{Y, \e})$ is injective.
Then $\#g(Q_{Y, \e}) = \vk$; consequently, the set $g(Q_{Y, \e})$ cannot simultaneously be $\bigl(\tr_{X}(\vk) + \e \bigr)$-separated (this set is bounded because the map $g$ is bounded). The latter means that there exist $y, \tilde{y} \in Q_{Y, \e}$, for which $d_{X}(g(y), g(\tilde{y})) < \tr_{X}(\vk) + \e$.
At the same time, for these $y, \tilde{y} \in Q_{Y, \e}$ we have $d_{Y}(y, \tilde{y}) \ge \tr_{Y}(\vk) - \e$.
Since $\e < \bigl(\tr_{Y}(\vk) - \tr_{X}(\vk)\bigr) / 3$, the following inequality holds
$$
\Bigl| d_{Y}(y, \tilde{y}) - d_{X}(g(y), g(\tilde{y})) \Bigr| > \bigl|\tr_{Y}(\vk) - \tr_{X}(\vk) \bigr| - 2\e.
$$
Letting $\e \to 0$, we obtain $\dis(g) \ge \bigl| \tr_{X}(\vk) - \tr_{Y}(\vk) \bigr|$.
By \ref{ass:GH_dfn} we have $2d_{GH}(X, Y) \ge \dis(g) \ge \bigl| \tr_{X}(\vk) - \tr_{Y}(\vk) \bigr|$.

\item Let $\tr_{Y}(\vk) = \infty$. We argue in exactly the same way as in the previous item. If $\tr_{X}(\vk) = \infty$, there is nothing to prove, so we assume that $\tr_{X}(\vk) < \infty$.
Fix a number $M > \tr_{X}(\vk) + 10$ and consider an $M$-separated set $Q_M \subset Y$ of cardinality $\# Q_{M} = \vk$.
For an arbitrary bounded map $g \: Y \to X$ two cases are possible.
\begin{itemize}
\item
If $g|_{Q_M}$ is not injective, then $\dis(g) \ge M$.
\item If $g|_{Q_M}$ is injective, then $\dis(g) \ge |M - \tr_{X}(\vk) - 1|$.
Indeed, $\# g(Q_M) = \vk$ and, consequently, the set $g(Q_M)$ cannot be $(\tr_{X}(\vk) + 1)$-separated.
\end{itemize}
Letting $M \to \infty$, we obtain $d_{GH}(X, Y) = \infty$.
\end{enumerate}
\end{proof}

\begin{rk}
Note that for any $X, Y \in \cGH$ we can consider the distance
$$D(X, Y) = \sup_{\vk} \bigl| \tr_{X}(\vk) - \tr_{Y}(\vk) \bigr|.$$
It is symmetric and satisfies the triangle inequality, that is, it is a generalized pseudometric.
Theorem~\ref{thm:main} asserts that
$$2d_{GH}(X, Y) \ge D(X, Y).$$
\end{rk}

Similarly to Theorem~\ref{thm:main} one can prove the following theorem.

\begin{thm}\label{thm:main2}
Let $X, Y$ be metric spaces. Then
$$2d_{GH}(X, Y) \ge \sup_{\vk \ge 2} \bigl| \Tr_{X}(\vk) - \Tr_{Y}(\vk) \bigr|,$$
where the supremum is taken over all cardinals $\vk \ge 2$.
\end{thm}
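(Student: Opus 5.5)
We follow the proof of Theorem~\ref{thm:main} almost verbatim; the only difference is that the separated sets are no longer required to be bounded. Because of this we work with Proposition~\ref{prop:def}, that is, with arbitrary maps, rather than with Assertion~\ref{ass:GH_dfn}. As before, we may assume $d_{GH}(X,Y)<\infty$ and fix a cardinal $\vk\ge 2$. By symmetry we may also assume $\Tr_X(\vk)<\Tr_Y(\vk)$. It then suffices to show that every map $g\:Y\to X$ satisfies $\dis(g)\ge \Tr_Y(\vk)-\Tr_X(\vk)$, since $2d_{GH}(X,Y)\ge\inf_g\dis(g)$ by Proposition~\ref{prop:def}.

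\emph{Finite case} $\Tr_Y(\vk)<\infty$. Take $\e>0$ with $\e<\bigl(\Tr_Y(\vk)-\Tr_X(\vk)\bigr)/3$ and $\e\le\Tr_X(\vk)$ (if $\Tr_X(\vk)=0$, drop the second condition and pass to the limit at the end, as in Theorem~\ref{thm:main}). Choose a $(\Tr_Y(\vk)-\e)$-separated set $Q\subset Y$ with $\#Q=\vk$; it need not be bounded. If $g|_Q$ is not injective, some pair $y\ne\tilde y$ in $Q$ has $g(y)=g(\tilde y)$, so $\dis(g)\ge\Tr_Y(\vk)-\e\ge\Tr_Y(\vk)-\Tr_X(\vk)$. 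If $g|_Q$ is injective, then $\#g(Q)=\vk$. By the definition of $\Tr_X$, this set cannot be $(\Tr_X(\vk)+\e)$-separated, and here no boundedness of $g(Q)$ is needed. Hence some $y,\tilde y\in Q$ satisfy $d_X(g(y),g(\tilde y))<\Tr_X(\vk)+\e$, which gives $\dis(g)>\Tr_Y(\vk)-\Tr_X(\vk)-2\e$. Letting $\e\to0$ finishes this case.

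\emph{Infinite case} $\Tr_Y(\vk)=\infty$ and $\Tr_X(\vk)<\infty$. For $M>\Tr_X(\vk)+10$, take an $M$-separated $Q_M\subset Y$ of cardinality $\vk$. The same dichotomy gives $\dis(g)\ge\min\{M,\ M-\Tr_X(\vk)-1\}$ for every $g$. Letting $M\to\infty$ yields $d_{GH}(X,Y)=\infty$, which contradicts our assumption, so this case cannot occur.

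The only step needing care is the remark that boundedness was used in Theorem~\ref{thm:main} solely to ensure $g(Q)$ is admissible in the definition of $\tr_X$. For $\Tr_X$ no such restriction exists, so the argument goes through for arbitrary $g$, and even more simply.
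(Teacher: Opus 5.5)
Your proposal is correct and is essentially the paper's argument: the paper proves this only by saying it goes ``similarly to Theorem~\ref{thm:main}'', and you carry out that adaptation faithfully. You work with arbitrary maps via Proposition~\ref{prop:def} and correctly note that $g(Q)$ needs no boundedness to be admissible in the definition of $\Tr_X$.
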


\begin{examp}
Using Theorem~\ref{thm:main}, let us estimate the Gromov--Hausdorff distance between the segment $I_{\lambda} = [0, \lambda]$ and the circle
$$S^1 = \bigl\{ (x, y) \in \R^{2} \mid x^{2} + y^{2} = 1\bigr\}$$
with the intrinsic metric.
In Example~\ref{examp:01} it was shown that for $n \in \N$ we have
$\tr_{I_{1}}(n) = \frac{1}{n - 1}$, whence we obtain
$\tr_{I_{\lambda}}(n) = \frac{\lambda}{n - 1}$.
In Example~\ref{examp:S1} it was shown that
$\tr_{S^1}(n) = \frac{2\pi}{n}$.
Then, by Theorem~\ref{thm:main}, we obtain
$$d_{GH}(I_{\lambda}, S^{1}) \ge \frac{1}{2} \max\Bigl\{ \bigl| \diam(I_{\lambda}) - \diam(S^1) \bigr|, \ \bigl|\tr_{I_{\lambda}}(3) - \tr_{S^1}(3)\bigr|\Bigr\} = \frac{1}{2} \max\biggl\{ \Bigl| \lambda - \pi \Bigr|, \ \Bigl|\frac{\lambda}{4} - \frac{\pi}{3} \Bigr| \biggr\}.$$
This distance was computed explicitly in \cite{JiTuzhilin}:
$$
d_{GH}(I_{\lambda},S^1)=
\begin{cases}
\dfrac{\pi}{2} - \dfrac{\lambda}{4}, & \text{ for } \ 0 \le \lambda \le \dfrac{2\pi}{3}, \\[10pt]
\dfrac{\pi}{3}, & \text{ for } \ \dfrac{2\pi}{3} \le \lambda \le \dfrac{5\pi}{3}, \\[10pt]
\dfrac{\lambda - \pi}{2}, & \text{ for } \ \lambda \ge \dfrac{5\pi}{3}.
\end{cases}
$$
Our bound is not sharp in this example (except for the case where the distance equals the difference of the diameters (the case $\lambda \ge 5\pi/3$), which corresponds to the bound for the 2-trace).
\end{examp}

\subsection{Estimate of the Gromov--Hausdorff distance between a metric space and its Hausdorff hyperspace}

\begin{dfn}
Let $(X, d_{X})$ be a generalized pseudometric space.
For nonempty $A, B \subset X$ we define the \emph{Hausdorff distance} between $A$ and $B$:
$$d^{H}_{X}(A, B) = \max \biggl\{ \sup_{a \in A} \inf_{b \in B} d_{X}(a,b), \
\sup_{b \in B} \inf_{a \in A} d_{X}(a, b) \biggr\}.$$
\end{dfn}

\begin{notation}
For an arbitrary metric space $X$ let $\cH(X)$ denote the family of all nonempty closed bounded subsets of $X$.
\end{notation}

\begin{prop}[\cite{BurBurIva}] \label{prop:H}
For every metric space $X$ the Hausdorff distance is a metric on $\cH(X)$.
Moreover, the spaces $X$ and $\cH(X)$ simultaneously possess any one of the following properties\/\rom: completeness, boundedness, total boundedness, compactness.
\end{prop}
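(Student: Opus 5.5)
The plan is to verify the metric axioms for $d^{H}_{X}$ on $\cH(X)$ first, and then treat the four properties one at a time.

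\emph{Metric axioms.} For $A,B\in\cH(X)$ the value $d^H_X(A,B)$ is finite, because both sets are bounded: if $a_0\in A$, $b_0\in B$, then $d_X(a,b_0)\le \diam A + d_X(a_0,b_0)$. Symmetry and nonnegativity are immediate from the definition. If $d^H_X(A,B)=0$, then every $a\in A$ satisfies $\inf_{b\in B}d_X(a,b)=0$, so $a\in\overline B=B$. Hence $A\subset B$, and by symmetry $A=B$. Closedness is used exactly here. For the triangle inequality, I will write $d^H_X(A,B)\le r$ as ``$A\subset U_{r+\e}(B)$ and $B\subset U_{r+\e}(A)$ for every $\e>0$''. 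Then the inclusion $U_s(U_t(C))\subset U_{s+t}(C)$ follows from the triangle inequality in $X$.

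\emph{Boundedness.} The map $x\mapsto\{x\}$ is an isometric embedding $X\hookrightarrow\cH(X)$, so if $\cH(X)$ is bounded, so is $X$. Conversely, for $A,B\subset X$ we have $d^H_X(A,B)\le\diam X$. The same embedding shows that a Cauchy sequence of points gives a Cauchy sequence of singletons. It also shows that $X$ is isometric to the subset of singletons, and this subset is closed in $\cH(X)$, since a Hausdorff limit of singletons is a singleton. This gives the directions ``$\cH(X)$ has the property $\Rightarrow X$ has it'' for completeness, total boundedness and compactness, because all three pass to (closed) subspaces.

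\emph{Total boundedness.} For the converse, take a finite $\e$-net $S$ of $X$. The nonempty subsets of $S$ form a finite $\e$-net in $\cH(X)$: for $A\in\cH(X)$, put $S_A=\{s\in S: d_X(s,A)<\e\}$. Then $S_A$ is nonempty and $d^H_X(A,S_A)\le\e$. Compactness then follows from completeness together with total boundedness.

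\emph{Completeness (main obstacle).} Given a Cauchy sequence $(A_n)$ in $\cH(X)$, I will pass to a subsequence with $d^H_X(A_n,A_{n+1})<2^{-n}$. Define $A$ as the set of limits of all sequences $a_n\in A_n$ (for $n\ge n_0$) with $d_X(a_n,a_{n+1})<2^{-n}$; these limits exist because $X$ is complete. Take $A$ closed, or equivalently the upper limit set.
\begin{enumerate}
\item $A$ is nonempty: starting from any $a\in A_N$, build such a chain inductively. Its limit lies within $2^{-N+1}$ of $a$. This also gives $A_N\subset U_{2^{-N+1}}(A)$.
\item $A\subset \overline{U}_{2^{-N+1}}(A_N)$: the tail of each chain stays close to $A_N$.
\item $A$ is bounded, because it is contained in a neighbourhood of the bounded set $A_N$.
\end{enumerate}
Hence $d^H_X(A_N,A)\to0$. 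A Cauchy sequence with a convergent subsequence converges, which finishes the proof. The delicate part is the careful bookkeeping of the chains and checking that the closure does not spoil the estimates.
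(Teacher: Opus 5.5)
The paper gives no proof of this proposition; it is quoted from Burago--Burago--Ivanov. Your argument is the standard one found there, and the overall structure is correct. You use the closed isometric copy of $X$ formed by singletons for the directions ``$\cH(X)\Rightarrow X$'', the finite family of nonempty subsets of a finite net for total boundedness, and the chain (upper-limit) construction for completeness. Compactness then follows as completeness plus total boundedness.

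There is one small slip in the total boundedness step. The paper's convention is that an $\e$-net only guarantees $d_X(a,x)\le\e$. Under that convention your set $S_A=\{s\in S : d_X(s,A)<\e\}$ can be empty. For example, take $X=[-1,1]$, $\e=1$, $S=\{0\}$ and $A=\{1\}$. Even when $S_A$ is nonempty, some $a\in A$ may have its only nearby net point at distance exactly $\e$ from $A$, so that point is excluded and $d^H_X(A,S_A)\le\e$ can fail. Define $S_A$ with $\le\e$, as Burago--Burago--Ivanov do, or work with nets satisfying the strict inequality; the rest of your argument then goes through verbatim.

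The closure issue you flag in the completeness step is harmless. The set $\overline{U}_{2^{-N+1}}(A_N)$ is closed, so the inclusion $A\subset\overline{U}_{2^{-N+1}}(A_N)$ survives passing to the closure of the set of chain limits. The other inclusion, $A_N\subset U_{2^{-N+1}}(A)$, only becomes easier when $A$ is enlarged.
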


\begin{notation}
Let $\vk \ge 2$ be a cardinal. We denote by $2^{\vk} - 1$ the cardinal defined as follows:
$$
2^{\vk} - 1 =
\begin{cases}
2^{\vk} - 1, & \text{if } \vk \text{ is finite}, \\
2^{\vk},     & \text{if } \vk \text{ is infinite}.
\end{cases}
$$
Here in the finite case the right-hand side is the ordinary difference of cardinals, whereas subtracting one from an infinite cardinal is undefined, so we put $2^{\vk} - 1 = 2^{\vk}$ by definition.
\end{notation}

\begin{prop}\label{prop:H_trace}
Let $X$ be a metric space and $\vk \ge 2$ be a cardinal.
Then $\tr_{\cH(X)}(2^{\vk} - 1) \ge \tr_{X}(\vk)$.
\end{prop}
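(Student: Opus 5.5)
Given $r<\tr_X(\vk)$, we take a bounded $r$-separated set $Q\subset X$ with $\#Q=\vk$ and look at the family of all nonempty subsets of $Q$. There are exactly $2^{\vk}-1$ of them: in the finite case this is the ordinary count, and in the infinite case it is $2^{\vk}$ in the sense of the Notation above.

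First we check that these subsets lie in $\cH(X)$. Each nonempty $A\subset Q$ is bounded because $Q$ is. It is also closed, since $Q$ is $r$-separated: a sequence in $A$ converging in $X$ is Cauchy, so it is eventually constant, and its limit therefore lies in $A$.

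Next we show that the family $\{A\subset Q: A\neq\emptyset\}$ is $r$-separated in $\cH(X)$. Take $A\neq B$. Without loss of generality there is a point $q\in A\setminus B$ (otherwise swap $A$ and $B$). For every $b\in B$ we have $b\neq q$, so $d_X(q,b)\ge r$. Hence $\inf_{b\in B}d_X(q,b)\ge r$, and therefore $d^H_X(A,B)\ge r$.

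The family is also bounded in $\cH(X)$. For nonempty $A,B\subset Q$ we have $d^H_X(A,B)\le\diam(Q)<\infty$.

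This produces a bounded $r$-separated subset of $\cH(X)$ of cardinality $2^{\vk}-1$, so $\tr_{\cH(X)}(2^{\vk}-1)\ge r$. Letting $r\to\tr_X(\vk)$ gives the claim. The edge cases are harmless. If $\tr_X(\vk)=0$ there is nothing to prove. If $\tr_X(\vk)=\infty$, the same argument works for every finite $r$.

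The only point needing care is the cardinality bookkeeping for infinite $\vk$. The number of nonempty subsets is $2^{\vk}$, because removing a single set from a family of infinite size $2^{\vk}$ does not change its cardinality. This is exactly why the paper sets $2^{\vk}-1:=2^{\vk}$.
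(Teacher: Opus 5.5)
Your proof is correct and is the same as the paper's: the paper also takes $\widetilde{Q}=2^{Q}\setminus\{\emptyset\}$ for a bounded $r$-separated $Q\subset X$ with $\#Q=\vk$, and notes that it is a bounded $r$-separated subset of $\cH(X)$ of cardinality $2^{\vk}-1$. You additionally carry out the checks the paper leaves implicit: subsets of an $r$-separated set are closed, the Hausdorff-distance lower bound comes from a point of $A\setminus B$, the family is bounded by $\diam(Q)$, and the cardinal count is right.
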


\begin{proof}
Let $Q \subset X$ be a bounded $r$-separated subset of cardinality $\# Q = \vk$.
Then consider the set $\widetilde{Q} = 2^{Q} \setminus \{ \emptyset \} \subset \cH(X)$.
Then $\widetilde{Q} \subset \cH(X)$ is a bounded $r$-separated subset of cardinality $2^{\vk} - 1$.
Passing to the supremum over $r > 0$, we obtain the required result.
\end{proof}

\begin{thm}\label{thm:H_trace_2}
Let $X$ be a metric space and $\vk \ge 2$ be a cardinal.
Then $$2d_{GH}\bigl(X, \cH(X) \bigr) \ge \tr_{X}(\vk) - \tr_{X}(2^{\vk} - 1) \ge 0.$$
In particular,
$2d_{GH}\bigl(X, \cH(X)\bigr) \ge \diam(X) - \tr_{X}(3) \ge 0$.
\end{thm}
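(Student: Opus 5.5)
The plan is to combine Theorem~\ref{thm:main}, applied at the cardinal $2^{\vk}-1$, with Proposition~\ref{prop:H_trace}. Theorem~\ref{thm:main} applied to $X$ and $\cH(X)$ with the cardinal $\lambda = 2^{\vk}-1$ gives
$$2d_{GH}\bigl(X,\cH(X)\bigr) \ge \bigl|\tr_{\cH(X)}(2^{\vk}-1) - \tr_X(2^{\vk}-1)\bigr|.$$
Proposition~\ref{prop:H_trace} gives $\tr_{\cH(X)}(2^{\vk}-1) \ge \tr_X(\vk)$. Monotonicity, item~\ref{prop:properties:5} of Proposition~\ref{prop:properties}, applies because $2^{\vk}-1 \ge \vk$; this holds in the finite case since $\vk\ge 2$, and in the infinite case since $2^\vk>\vk$. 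It yields $\tr_X(\vk) \ge \tr_X(2^{\vk}-1)$, which is the claimed second inequality $\tr_X(\vk)-\tr_X(2^\vk-1)\ge 0$.

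Putting these together, $\tr_{\cH(X)}(2^{\vk}-1) \ge \tr_X(\vk) \ge \tr_X(2^{\vk}-1)$. Hence the absolute value in Theorem~\ref{thm:main} is at least $\tr_{\cH(X)}(2^{\vk}-1) - \tr_X(2^{\vk}-1)$, which in turn is at least $\tr_X(\vk)-\tr_X(2^{\vk}-1)$.

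The only delicate step is the arithmetic with infinite values, under the Convention on extended differences. We split into three cases.
\begin{enumerate}
\item If $\tr_X(2^\vk-1)=\infty$, then $\tr_X(\vk)=\infty$ as well. The right-hand side is $\infty-\infty:=0$, and the claim is trivial.
\item If $\tr_X(\vk)=\infty$ but $\tr_X(2^\vk-1)<\infty$, then $\tr_{\cH(X)}(2^\vk-1)=\infty$. The absolute value in Theorem~\ref{thm:main} is then $\infty$, so $d_{GH}=\infty$, matching the right-hand side $\infty$.
\item If everything is finite, the chain of inequalities above is ordinary real arithmetic.
\end{enumerate}
Once the cases are written down, none of this is a real obstacle.

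For the ``in particular'' statement, take $\vk=2$. Then $2^{\vk}-1=3$, and item~\ref{prop:properties:3} of Proposition~\ref{prop:properties} gives $\tr_X(2)=\diam(X)$. Hence $2d_{GH}(X,\cH(X)) \ge \diam(X)-\tr_X(3)\ge 0$.
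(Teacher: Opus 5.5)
Your proof is correct and is essentially the paper's argument: you obtain the chain $\tr_X(2^{\vk}-1) \le \tr_X(\vk) \le \tr_{\cH(X)}(2^{\vk}-1)$ from monotonicity and Proposition~\ref{prop:H_trace}, and feed it into Theorem~\ref{thm:main} at the cardinal $2^{\vk}-1$. The paper leaves implicit both your case split for infinite trace values under the Convention and your specialization $\vk=2$ for the ``in particular'' clause; you handle both correctly.
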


\begin{proof}
According to item~\ref{prop:properties:5} of Proposition~\ref{prop:properties}, as well as to Proposition~\ref{prop:H_trace}, we have
$$\tr_{X}(2^{\vk} - 1) \le \tr_{X}(\vk) \le \tr_{\cH(X)}(2^{\vk} - 1).$$
Then, by Theorem~\ref{thm:main} we obtain
\begin{multline*}
2d_{GH}\bigl(X, \cH(X)\bigr) \ge \bigl| \tr_{X}(2^{\vk} - 1) - \tr_{\cH(X)}(2^{\vk} - 1)\bigr| \\ = \tr_{\cH(X)}(2^{\vk} - 1) - \tr_{X}(2^{\vk} - 1) \ge \tr_{X}(\vk) - \tr_{X}(2^{\vk} - 1) \ge 0.
\end{multline*}
\end{proof}

\begin{examp}
Consider the segment $X = [0, 1]$.
Then from Theorem~\ref{thm:H_trace_2} and from Example~\ref{examp:01} we obtain
$$d_{GH}\bigl(X, \cH(X)\bigr) \ge \frac{\diam(X) - \tr_{X}(3)}{2} = \frac{1 - 1/2}{2} = \frac{1}{4}.$$
\end{examp}

Recall that a subset $A$ of a metric space $X$ is called an $\e$-net if for any $x \in X$ there exists a point $a \in A$, satisfying the condition $d_{X}(a, x) \le \e$.
Recall also that a metric space $X$ is called totally bounded if for every $\e > 0$ there exists a finite $\e$-net.

We shall need the following assertion.

\begin{ass}[\cite{BurBurIva}] \label{ass:e_net}
Let $X$ be a metric space.
If there exists an $\e / 3$-net of $n$ points, then any $\e$-separated set $Q \subset X$
cannot contain more than $n$ points.
\end{ass}

\begin{prop}\label{prop:tr_iff}
Let $X$ be a metric space. Then $X$ is totally bounded if and only if $\Tr_{X}(\aleph_{0}) = 0$.
\end{prop}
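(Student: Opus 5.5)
We prove the two implications separately, working directly from the definition of $\Tr_X(\aleph_0)$ together with Assertion~\ref{ass:e_net}.

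\emph{Necessity.} Suppose $X$ is totally bounded and, to get a contradiction, that $\Tr_X(\aleph_0) > 0$. Then for some $r > 0$ there is an $r$-separated set $Q \subset X$ with $\#Q = \aleph_0$. Total boundedness gives a finite $r/3$-net of some $n$ points. By Assertion~\ref{ass:e_net}, every $r$-separated subset of $X$ then has at most $n$ points. This contradicts $\#Q = \aleph_0$, so $\Tr_X(\aleph_0) = 0$. (If one prefers not to rely on the assertion, the same conclusion follows by pigeonhole: some ball of radius $r/3$ about a net point contains two points of $Q$, which are then at distance at most $2r/3 < r$.)

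\emph{Sufficiency.} Suppose $X$ is not totally bounded. Then there is $\e > 0$ such that $X$ has no finite $\e$-net. We build an infinite $\e$-separated sequence greedily. Pick any $x_1$. Given $x_1, \dots, x_k$, this finite set is not an $\e$-net, so there is $x_{k+1}$ with $d_X(x_{k+1}, x_i) > \e$ for all $i \le k$. The set $\{x_k\}_{k \in \N}$ is $\e$-separated, and its points are distinct, so it has cardinality $\aleph_0$. Hence $\Tr_X(\aleph_0) \ge \e > 0$. Note that here we use the unbounded trace $\Tr$, so the constructed set does not need to be bounded; this is why the statement is about $\Tr$ rather than $\tr$.

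The main subtlety is in the sufficiency direction. One must make sure that the supremum in the definition of $\Tr$ is attained by some set of cardinality \emph{exactly} $\aleph_0$. The greedy construction gives exactly a countably infinite set, so this causes no difficulty. The only choice used is countable (dependent) choice, which is routine. The rest of the argument is bookkeeping.
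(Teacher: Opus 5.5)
Your proof is correct and takes the same approach as the paper. Necessity uses a finite $r/3$-net together with Assertion~\ref{ass:e_net}, and sufficiency greedily builds a countably infinite separated sequence from an $\e$ that admits no finite $\e$-net; the only cosmetic difference is that you keep the sharper separation $\e$ where the paper settles for $\e/2$.
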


\begin{proof}
Let $X$ be totally bounded.
Suppose that $r_{0} := \Tr_{X}(\aleph_{0}) > 0$.
Then for every $0 < r < r_{0}$ there exists an $r$-separated set $Q \subset X$ of cardinality $\# Q = \aleph_{0}$.
Since $X$ is totally bounded, there exists a finite $(r / 3)$-net, which contradicts Assertion~\ref{ass:e_net}.

Let $\Tr_{X}(\aleph_{0}) = 0$.
Suppose that $X$ is not totally bounded.
Consider $\e > 0$, for which there are no finite $\e$-nets in $X$.
We construct a countable $\e / 2$-separated set $Q \subset X$ as follows.
Choose a point $x_{1} \in X$. Since the set $\{x_{1}\}$ is not an $\e$-net, there exists a point $x_{2} \in X$ such that $d_{X}(x_{1}, x_{2}) > \e \ge \e / 2$.
Next, the set $\{x_{1}, x_{2}\}$ is not an $\e$-net, so there exists a point $x_{3}$ such that $d_{X}(x_{j}, x_{3}) > \e > \e/2$ for $j = 1, 2$. Continuing the process, we obtain a countable $\e/2$-separated set $Q = \{x_{1}, x_{2}, x_{3}, \dots \}$.
Then $\Tr_{X}(\aleph_{0}) \ge \e/2$, which contradicts our assumption.
\end{proof}

\begin{cor}\label{cor:tr_iff}
Let $X$ be a bounded metric space. Then $X$ is totally bounded if and only if $\tr_{X}(\aleph_{0}) = 0$.
\end{cor}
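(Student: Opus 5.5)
The corollary is an immediate combination of Proposition~\ref{prop:tr_iff} with item~\ref{prop:properties:4} of Proposition~\ref{prop:properties}.

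First, since $X$ is bounded, item~\ref{prop:properties:4} gives $\tr_{X}(\vk) = \Tr_{X}(\vk)$ for every cardinal $\vk \ge 2$, and in particular for $\vk = \aleph_0$. This item is immediate in any case: every subset of a bounded space is bounded, so the two suprema in the definitions of $\tr_X(\aleph_0)$ and $\Tr_X(\aleph_0)$ are taken over the same family of $r$-separated subsets $Q_r$ of cardinality $\aleph_0$.

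Second, Proposition~\ref{prop:tr_iff} states that $X$ is totally bounded if and only if $\Tr_X(\aleph_0) = 0$. Substituting $\tr_X(\aleph_0) = \Tr_X(\aleph_0)$ yields that $X$ is totally bounded if and only if $\tr_X(\aleph_0) = 0$, which is the claim.

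There is no real obstacle. The only point to check is that the boundedness hypothesis is really needed for the identification $\tr_X(\aleph_0) = \Tr_X(\aleph_0)$. Without it, the statement can fail: $\Z$ is not totally bounded, yet $\tr_{\Z}(\aleph_0) = 0$, because every bounded subset of $\Z$ is finite.
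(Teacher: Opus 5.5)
Your proposal is correct and is exactly the paper's argument: the corollary follows immediately from Proposition~\ref{prop:tr_iff} combined with item~\ref{prop:properties:4} of Proposition~\ref{prop:properties}, which gives $\tr_X(\aleph_0) = \Tr_X(\aleph_0)$ for bounded $X$. Your one-line justification of that item and your $\Z$ counterexample, which the paper also gives right after the corollary, are both accurate.
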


\begin{proof}
This follows immediately from Proposition~\ref{prop:tr_iff} and from item~\ref{prop:properties:4} of Proposition~\ref{prop:properties}.
\end{proof}

\begin{examp}
Note that in Corollary~\ref{cor:tr_iff} one cannot drop the condition that $X$ be bounded.
Consider $X = \Z, \ d_{X}(x, y) = |x - y|$.
Then $X$ is unbounded, but $\tr_{X}(\aleph_{0}) = 0$, since any countable subset $Q \subset X$ is unbounded.
\end{examp}

\begin{notation}
To describe the cardinality of metric spaces, we introduce the cardinals
$\{\beth_{\alpha}\}$, defined recursively:
$\beth_{0} = \aleph_{0}$~--- the countable cardinality, and $\beth_{\alpha + 1} = 2^{\beth_{\alpha}}$
for every ordinal $\alpha$. For a limit ordinal $\alpha$ we define
$\beth_{\alpha} = \sup_{\beta < \alpha} \beth_{\beta}$.
We denote by $\omega$ the first infinite ordinal; then $\beth_{\omega} = \sup_{n \in \N} \beth_{n}$.
\end{notation}

\begin{cor}\label{thm:H_trace}
Let $X$ be a bounded metric space different from the one-point space $\{pt\}$.
Suppose that $\# X < \beth_{\omega}$.
Then
$$d_{GH}\bigl(X, \cH(X) \bigr) > 0.$$
\end{cor}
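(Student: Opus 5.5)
We argue by contradiction using Theorem~\ref{thm:H_trace_2}, applied to a chain of cardinals. Suppose $d_{GH}(X, \cH(X)) = 0$. Then for every cardinal $\vk \ge 2$ Theorem~\ref{thm:H_trace_2} gives $\tr_{X}(\vk) - \tr_{X}(2^{\vk} - 1) \le 0$. By item~\ref{prop:properties:5} of Proposition~\ref{prop:properties} the reverse inequality also holds, so
$$\tr_{X}(2^{\vk} - 1) = \tr_{X}(\vk) \quad \text{for every cardinal } \vk \ge 2.$$

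Start from $\vk_0 = 2$ and iterate $\vk_{j+1} = 2^{\vk_j} - 1$. This gives $\vk_1 = 3$, $\vk_2 = 7$, and so on, so all finite $\vk_j$ are unbounded. Combining the equalities with monotonicity (item~\ref{prop:properties:5}) and item~\ref{prop:properties:3}, we get
$$\tr_{X}(n) = \diam(X) \quad \text{for every finite } n \ge 2.$$
Since $X$ is not a single point, $\diam(X) > 0$. Passing from all finite $n$ to $\aleph_0$ is the main obstacle, because the iteration never reaches an infinite cardinal.

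We bypass this via Corollary~\ref{cor:tr_iff}. We treat the finite case first, then pass to infinite cardinals.

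\emph{Finite case.} If $X$ is finite, then $\tr_X(n) = 0$ for $n > \#X$ by item~\ref{prop:properties:9}. This contradicts $\diam(X) > 0$.

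\emph{Infinite case: the obstacle.} For infinite $X$ we need some infinite cardinal with positive trace. Knowing that $n$-point $r$-separated sets exist for each $n$ and every $r < \diam(X)$ does not directly give an infinite $r$-separated set, since a priori these sets need not fit together. Instead we argue by cases on total boundedness, using Corollary~\ref{cor:tr_iff} (valid because $X$ is bounded):
\begin{itemize}
\item If $X$ is not totally bounded, then $\tr_{X}(\aleph_0) > 0$.
\item If $X$ is totally bounded, then Assertion~\ref{ass:e_net} applied to a finite $\diam(X)/6$-net bounds the size of $\diam(X)/2$-separated sets. This contradicts $\tr_X(n) = \diam(X)$ for all $n$.
\end{itemize}
So in the remaining case $\tr_X(\aleph_0) > 0$.

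\emph{Infinite case: climbing the $\beth$-hierarchy.} Now iterate from $\vk = \aleph_0$. The equality $\tr_X(2^{\vk}) = \tr_X(\vk)$ gives $\tr_X(\beth_n) = \tr_X(\aleph_0) > 0$ for every $n$. Since $\#X < \beth_\omega$, there is $n$ with $\beth_n > \#X \ge \dens(X)$. Then item~\ref{prop:properties:9} gives $\tr_X(\beth_n) = 0$, a contradiction.

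The only delicate step is the transition from finite to infinite cardinals, which the total-boundedness dichotomy handles. The hypothesis $\#X < \beth_\omega$ is used only in the last step, to guarantee that some finite stage of the iteration exceeds the density of $X$.
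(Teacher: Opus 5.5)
Your proof is correct and takes essentially the same route as the paper. You assume $\tr_X(\vk)=\tr_X(2^{\vk}-1)$ for all $\vk \ge 2$ and split on total boundedness: in the totally bounded case you play Assertion~\ref{ass:e_net} against $\tr_X(n)=\diam(X)$, and otherwise you use Corollary~\ref{cor:tr_iff}, the iteration along $\beth_n$ and item~\ref{prop:properties:9} of Proposition~\ref{prop:properties}. Your separate treatment of finite $X$ is harmless but redundant, since finite spaces already fall under the totally bounded case.
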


\begin{proof}
The first inequality follows from Theorem~\ref{thm:H_trace_2} by passing to the supremum.
It remains to justify the last inequality.
Consider the following family of cardinals:
$$S = \bigl\{\vk \ge 2 \mid \tr_{X}(\vk) \ne \tr_{X}(2^\vk - 1)\bigr\}.$$
We have to show that $S \ne \emptyset$.
We argue by contradiction and consider the following two cases.

\begin{enumerate}
\item
Suppose that $X$ is totally bounded.
This condition is equivalent to the following: for every $\e > 0$ there exists a finite $\e$-net.
Suppose that $S = \emptyset$.
Since in this case for all $n \in \N$ we have $\tr_{X}(n) = \tr_{X}(2^n - 1)$, taking into account item~\ref{prop:properties:5} of Proposition~\ref{prop:properties}, we obtain
$\tr_{X}(n) = \tr_{X}(2) = \diam(X) > 0$ for all $n \in \N$.
Choose a number $\e$ such that $0 < \e < \diam(X) / 10$ and consider a finite $\e$-net $A \subset X$.
Choose a number $n$ such that $n > \# A$.
Since $\tr_{X}(n) = \diam(X) > 0$, there exists a bounded $(\diam(X) - \e)$-separated subset $Q \subset X$ of cardinality $\# Q = n$, which contradicts Assertion~\ref{ass:e_net}.
Thus $S \ne \emptyset$.
\item
Suppose that $X$ is not totally bounded.
Then, by Corollary~\ref{cor:tr_iff} we obtain
$\tr_{X}(\aleph_{0}) > 0$.
Since $S = \emptyset$, for every $n \in \N$ we obtain
$\tr_{X}(\beth_{n}) = \tr_{X}(\aleph_{0}) > 0$.
By the hypothesis of the corollary we must have $\# X < \beth_{\omega}$, which means that there exists a number $N \in \N$ for which $\# X < \beth_{N}$.
By item~\ref{prop:properties:9} of Proposition~\ref{prop:properties} we obtain $\tr_{X}(\beth_{N}) = 0$, which contradicts the inequality $\tr_{X}(\beth_{N}) > 0$.
Hence $S \ne \emptyset$.
\end{enumerate}
Thus $S \ne \emptyset$, which completes the proof of the corollary.
\end{proof}

\begin{quest}
Can one, in Corollary~\ref{thm:H_trace}, drop the condition $\# X < \beth_{\omega}$; that is, is it true for every bounded metric space $X \ne \{pt\}$ that
$$d_{GH}\bigl(X, \cH(X)\bigr) > 0?$$
\end{quest}

\section{Towers of metric spaces}

\subsection{Definitions and properties}

Consider the operation of passing to the Hausdorff hyperspace $\cH(X)$. Each
point $x \in X$ can be naturally identified with a one-point
subset $\{x\} \in \cH(X)$. Since the map $X \to \cH(X), \ x \mapsto \{x\}$
 is an isometric embedding, we
can regard $X$ as a subspace of $\cH(X)$. Iterating this
 process transfinitely, we arrive at the notion of a tower.

\begin{dfn}
Let $\alpha$ be an arbitrary ordinal. \emph{A tower of metric spaces of length $\alpha$}
(or an $\alpha$-tower) is a family of metric spaces $\{(X_{\beta}, d_{X_\beta})\}_{\beta < \alpha}$,
 satisfying the following conditions:
\begin{enumerate}
    \item if $\beta < \gamma < \alpha$, then $X_{\beta} \subset X_{\gamma}$, and
    the metric $d_{X_\gamma}$ on the subspace $X_{\beta}$ coincides with $d_{X_\beta}$;
    \item if $\gamma < \alpha$ is a limit ordinal, then the space $X_{\gamma}$
     is the union of all preceding levels of the tower:
    $$X_{\gamma} = \bigcup_{\beta < \gamma} X_{\beta}.$$
\end{enumerate}
The metric space $(X, d_{X})$, where $X = \bigcup_{\beta < \alpha} X_{\beta}$, and
the metric $d_{X}$ is induced by the levels of the tower (i.e. $d_{X}(x,y) = d_{X_\gamma}(x,y)$ for every
$\gamma < \alpha$, such that $x,y \in X_{\gamma}$), is called the \emph{inductive
limit} (or union limit) of this tower and is denoted by $X = \varinjlim X_{\beta}$.
\end{dfn}

 To study the Gromov--Hausdorff distance between inductive limits, we need
 to carry over the notion of correspondence to the case of towers.

\begin{dfn}
Let $\{(X_{\beta}, d_{X_{\beta}})\}_{\beta < \alpha}$ and $\{(Y_{\beta}, d_{Y_{\beta}})\}_{\beta < \alpha}$ be
two $\alpha$-towers of metric spaces. A family of sets $\{R_{\beta}\}_{\beta < \alpha}$,
where $\emptyset \ne R_{\beta} \subset X_{\beta} \times Y_{\beta}$, is called a \emph{tower of correspondences},
 if:
\begin{enumerate}
    \item for every $\beta < \alpha$ we have $R_{\beta} \in \cR(X_{\beta}, Y_{\beta})$;
    \item if $\beta < \gamma < \alpha$, then $R_{\beta} \subset R_{\gamma}$;
\end{enumerate}
The union $R = \bigcup_{\beta < \alpha} R_{\beta}$ is called the \emph{inductive limit
of the tower of correspondences} (denoted by $R = \varinjlim R_{\beta}$) and represents
a correspondence between the inductive limits $X = \varinjlim X_{\beta}$ and $Y = \varinjlim Y_{\beta}$.
\end{dfn}

\begin{lem}\label{lem:R_varinjlim}
Let $R = \varinjlim R_{\beta}$ be the inductive limit of an $\alpha$-tower of correspondences $\{R_{\beta}\}_{\beta < \alpha}$ between
$\alpha$-towers of metric spaces $\{X_{\beta}\}_{\beta < \alpha}$ and $\{Y_{\beta}\}_{\beta < \alpha}$. Then the distortion of the limit
correspondence equals the supremum of the distortions over the levels of the tower\/\rom:
$$\dis(R) = \sup_{\beta < \alpha} \dis(R_{\beta}).$$
\end{lem}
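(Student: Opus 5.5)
We prove the two inequalities $\dis(R) \ge \sup_{\beta<\alpha}\dis(R_\beta)$ and $\dis(R) \le \sup_{\beta<\alpha}\dis(R_\beta)$ separately, working directly from the definition of distortion as a supremum over pairs of points of the correspondence.

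For the inequality $\dis(R) \ge \dis(R_\beta)$, fix $\beta<\alpha$. Since $R_\beta \subset R$, every pair $(x,y),(x',y') \in R_\beta$ also lies in $R$. By condition~1 in the definition of a tower, the metric of $X$ restricted to $X_\beta$ is $d_{X_\beta}$, and likewise for $Y$. Hence
$$\bigl|d_{X_\beta}(x,x')-d_{Y_\beta}(y,y')\bigr| = \bigl|d_X(x,x')-d_Y(y,y')\bigr|.$$
So the set of numbers whose supremum defines $\dis(R_\beta)$ is contained in the set defining $\dis(R)$, and $\dis(R_\beta)\le\dis(R)$. Taking the supremum over $\beta$ gives the first inequality.

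For the reverse inequality, take arbitrary $(x,y),(x',y') \in R=\bigcup_{\beta<\alpha}R_\beta$. Then $(x,y)\in R_{\beta_1}$ and $(x',y')\in R_{\beta_2}$ for some $\beta_1,\beta_2<\alpha$. Put $\gamma=\max\{\beta_1,\beta_2\}<\alpha$. By the monotonicity $R_\beta\subset R_\gamma$ for $\beta<\gamma$, both pairs lie in $R_\gamma\subset X_\gamma\times Y_\gamma$. By the compatibility of the metrics again,
$$\bigl|d_X(x,x')-d_Y(y,y')\bigr| = \bigl|d_{X_\gamma}(x,x')-d_{Y_\gamma}(y,y')\bigr| \le \dis(R_\gamma)\le \sup_{\beta<\alpha}\dis(R_\beta).$$
Taking the supremum over all such pairs yields $\dis(R)\le\sup_\beta\dis(R_\beta)$.

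The only point requiring care is this directedness step: any two elements of $R$ must lie in a common level. It holds because the levels form a chain indexed by an ordinal, so the maximum of two indices is still below $\alpha$. Limit levels need no separate treatment, since the union condition is never used beyond $R=\bigcup_\beta R_\beta$. Infinite values cause no problem, since all quantities are suprema in $[0,\infty]$.
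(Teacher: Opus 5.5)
Your proposal is correct and follows essentially the same route as the paper: $\dis(R_\beta)\le\dis(R)$ comes from the inclusion $R_\beta\subset R$, and the reverse inequality comes from placing any two pairs of $R$ in the common level $R_{\max\{\beta_1,\beta_2\}}$. Your explicit use of the compatibility of the metrics $d_{X_\gamma}$, $d_{Y_\gamma}$ with $d_X$, $d_Y$ only spells out a step the paper leaves implicit.
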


\begin{proof}
Set
$$X = \bigcup_{\beta < \alpha} X_{\beta}, \quad Y = \bigcup_{\beta < \alpha} Y_{\beta}.$$
Consider $(x, y), (x', y') \in R$.
For them there exist $\beta_{0}, \beta'_{0} < \alpha$ such that $(x, y) \in R_{\beta_{0}}$ and $(x', y') \in R_{\beta'_{0}}$.
Let $\beta = \max\{\beta_{0}, \beta'_{0}\}$.
Then $(x, y), (x', y') \in R_{\beta}$, whence we obtain
$\bigl|d_{X}(x, x') - d_{Y}(y, y')\bigr| \le \dis(R_{\beta}) \le \sup_{\beta < \alpha} \dis(R_{\beta})$.
Passing to the supremum over $(x, y), (x', y') \in R$, we obtain
$$\dis(R) = \sup_{(x, y), (x', y') \in R} \bigl|d_{X}(x, x') - d_{Y}(y, y')\bigr| \le \sup_{\beta < \alpha} \dis(R_{\beta}).$$
The inequality $\dis(R_{\beta}) \le \dis(R)$, where $\beta < \alpha$, follows from the inclusion $R_{\beta} \subset R$. Consequently, $\sup_{\beta < \alpha} \dis(R_{\beta}) \le \dis(R)$ and, thereby, the equality is proved.
\end{proof}

\subsection{Hausdorff tower}

\begin{dfn}
Let $(X, d_X)$ be an arbitrary metric space.
By definition we set $\cH^{0}(X) = X$.
For every ordinal $\gamma \ge 1$ we define the space $\cH^{\gamma}(X)$ by induction:
\begin{itemize}
    \item if $\gamma = \beta + 1$, then $\cH^{\beta + 1}(X) := \cH\bigl(\cH^{\beta}(X)\bigr)$;
    \item if $\gamma$ is a limit ordinal, then $\cH^{\gamma}(X) := \bigcup_{\beta < \gamma} \cH^{\beta}(X)$.
\end{itemize}
The family $\{\cH^{\beta}(X)\}_{\beta < \alpha}$ forms the $\alpha$-\emph{Hausdorff tower},
 given by the space $X$.
\end{dfn}

Note that for every $X \in \cGH$ and every ordinal $\alpha$ we have $\diam\bigl(\cH^{\alpha}(X)\bigr) = \diam(X)$.

\begin{thm}[\cite{Me_d_{GH}_2026}] \label{thm:cH}
For any $X, Y \in \cGH$ we have
$$d_{GH}\bigl(\cH(X), \cH(Y) \bigr) \le d_{GH}(X, Y).$$
Moreover, for any bounded maps $f_{1} \: X \to Y, \ f_{2} \: Y \to X$ consider the bounded maps
$$F_{1} \: \cH(X) \to \cH(Y), \ F_{2} \: \cH(Y) \to \cH(X), \quad F_{1} \: A \mapsto \overline{f_{1}(A)}, \ F_{2} \: B \mapsto \overline{f_{2}(B)},$$
then $\dis(F_{1}) \le \dis(f_{1}), \ \dis(F_{2}) \le \dis(f_{2}), \ \codis(F_{1}, F_{2}) \le \codis(f_1, f_2)$.
\end{thm}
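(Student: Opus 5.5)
The plan is to prove the map-level estimates first; the inequality $d_{GH}(\cH(X),\cH(Y)) \le d_{GH}(X,Y)$ then follows from Assertion~\ref{ass:GH_dfn}. Indeed, if $d_{GH}(X,Y)=\infty$ there is nothing to prove. Otherwise, take bounded $f_1,f_2$ with $\max\{\dis f_1,\dis f_2,\codis(f_1,f_2)\}$ close to $2d_{GH}(X,Y)$. The induced $F_1,F_2$ give the same or a smaller maximum, and Proposition~\ref{prop:def} applied to $\cH(X),\cH(Y)$ yields the bound after taking the infimum. Two well-definedness points come first. Since $f_1$ is bounded, $f_1(A)$ is bounded for bounded $A$, so $\overline{f_1(A)}$ is a nonempty closed bounded set, i.e.\ an element of $\cH(Y)$. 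Boundedness of $F_1$ on bounded families will follow from the distortion bound itself, once it is established.

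I would use two elementary facts about the Hausdorff distance. First, $d^H(\overline{C},\overline{D}) = d^H(C,D)$, so closures may be ignored. Second, the Hausdorff distance can be written as
$$d^H(C,D)=\max\Bigl\{\sup_{c\in C} d(c,D),\ \sup_{d\in D} d(d,C)\Bigr\}.$$

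For $\dis(F_1)\le \dis(f_1)=:\delta$, fix $A,A'\in\cH(X)$ and set $h=d^H(A,A')$. I will show separately that $d^H(f_1A,f_1A')\le h+\delta$ and that $h\le d^H(f_1A,f_1A')+\delta$.

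For the first inequality, take $a\in A$ and $\e>0$. There is $a'\in A'$ with $d(a,a')<h+\e$, hence $d(f_1a,f_1a')<h+\e+\delta$. By symmetry in $A$ and $A'$ this gives $d^H(f_1A,f_1A')\le h+\delta$.

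For the second inequality, set $h'=d^H(f_1A,f_1A')$. For $a\in A$ there is $a'\in A'$ with $d(f_1a,f_1a')<h'+\e$, so $d(a,a')<h'+\e+\delta$. Hence $h\le h'+\delta$.

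The codistortion estimate works the same way but needs more care, and it is the step I expect to be the main obstacle. We must compare $d^H(A,\overline{f_2B})$ with $d^H(B,\overline{f_1A})$ for $A\in\cH(X)$ and $B\in\cH(Y)$, using only $|d(x,f_2y)-d(y,f_1x)|\le c:=\codis(f_1,f_2)$.

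Set $h=d^H(B,f_1A)$. I will treat the two one-sided terms of $d^H(A,f_2B)$ separately.

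For $a\in A$, the point $f_1a$ lies in $f_1A$, so there is $b\in B$ with $d(b,f_1a)<h+\e$. Then $d(a,f_2b)<h+\e+c$, which bounds $\sup_{a\in A} d(a,f_2B)$.

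For $f_2b$ with $b\in B$, there is $a\in A$ with $d(b,f_1a)<h+\e$. Then $d(a,f_2b)< h+\e+c$, which bounds $\sup_{b\in B} d(f_2b,A)$.

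Together these give $d^H(A,f_2B)\le h+c$. The reverse inequality follows symmetrically, by swapping the roles of $X,Y$ and of $f_1,f_2$. Thus each one-sided term is matched by a mixed point pair $(x,f_2y)$ versus $(y,f_1x)$, and the codistortion of $F_1,F_2$ is at most $c$.
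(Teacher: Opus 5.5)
Your proof is correct and complete. The paper does not prove Theorem~\ref{thm:cH} itself: it imports the result from the author's earlier work and uses it as a black box in the tower argument, so there is no in-paper proof to compare against. Your direct one-sided $\e$-argument is a self-contained proof of all three map-level estimates. In particular, the crossed matching in the codistortion step is right: $\sup_{a\in A}d(a,f_2B)$ is controlled through the points $f_1a\in f_1A$, and $\sup_{b\in B}d(f_2b,A)$ through $d(b,f_1A)$. Your reduction of $d_{GH}(\cH(X),\cH(Y))\le d_{GH}(X,Y)$ to these estimates, via Assertion~\ref{ass:GH_dfn} and Proposition~\ref{prop:def}, is also exactly right.

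One small point should be tightened. The statement also asserts that $F_1,F_2$ are bounded, and your remark that this ``follows from the distortion bound'' only works when $\dis(f_1)<\infty$. For an arbitrary bounded $f_1$, argue directly as follows. Suppose $d^H(A,A_0)\le R$ for all $A$ in a family $\mathcal{A}\subset\cH(X)$. Then $\bigcup\mathcal{A}$ lies in the closed $R$-neighbourhood of the bounded set $A_0$, so it is bounded. Hence $f_1\bigl(\bigcup\mathcal{A}\bigr)$ is contained in some closed ball $\overline{B}(y_0,S)$. All sets $F_1(A)$, $A\in\mathcal{A}$, then lie in that ball, and so they are pairwise at Hausdorff distance at most $2S$.
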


From this theorem we obtain a corollary.

\begin{thm}
For any $X, Y \in \cGH$ and every ordinal $\alpha$ we have
$$d_{GH}\bigl(\cH^{\alpha}(X), \cH^{\alpha}(Y) \bigr) \le d_{GH}(X, Y).$$
\end{thm}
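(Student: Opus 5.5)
The plan is to build, for any bounded maps $f_1 \: X \to Y$ and $f_2 \: Y \to X$, a tower of maps $F_1^{\beta} \: \cH^{\beta}(X) \to \cH^{\beta}(Y)$ and $F_2^{\beta} \: \cH^{\beta}(Y) \to \cH^{\beta}(X)$, $\beta \le \alpha$, by transfinite induction. At each stage we want
$$\dis(F_i^{\beta}) \le \dis(f_i), \qquad \codis(F_1^{\beta}, F_2^{\beta}) \le \codis(f_1, f_2),$$
and each new map should extend the previous ones. By Assertion~\ref{ass:GH_dfn} we may assume $d_{GH}(X,Y) < \infty$ and restrict to bounded $f_1, f_2$. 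Taking the infimum over such pairs and applying Proposition~\ref{prop:def} then gives the claimed inequality.

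The induction runs as follows.
\begin{itemize}
\item \emph{Base.} Set $F_i^0 = f_i$.
\item \emph{Successor step.} Given $F_i^{\beta}$, define $F_1^{\beta+1}(A) = \overline{F_1^{\beta}(A)}$ for $A \in \cH(\cH^{\beta}(X))$, and similarly for $F_2^{\beta+1}$. Theorem~\ref{thm:cH} gives the three inequalities at level $\beta+1$ with $F_i^{\beta}$ in place of $f_i$, hence by induction with $f_i$. These maps are again bounded, since finite distortion implies boundedness, so Theorem~\ref{thm:cH} applies at every stage.
\item \emph{Limit step.} For limit $\gamma$, let $F_i^{\gamma}$ be the union of the $F_i^{\beta}$, $\beta<\gamma$. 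This is well defined once compatibility is established. Any two points of $\cH^{\gamma}(X)$, or a pair $x \in \cH^{\gamma}(X)$, $y \in \cH^{\gamma}(Y)$, lie in some common level $\beta<\gamma$. Hence the distortion and co-distortion of $F_i^{\gamma}$ are suprema of those at lower levels, exactly as in Lemma~\ref{lem:R_varinjlim}.
\end{itemize}

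The main obstacle is compatibility: $F_1^{\beta+1}$ must extend $F_1^{\beta}$ under the embedding $x \mapsto \{x\}$. For a successor level this is immediate, since $F_1^{\beta+1}(\{x\}) = \overline{\{F_1^{\beta}(x)\}} = \{F_1^{\beta}(x)\}$.

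There is one subtlety. When $\beta$ is a limit ordinal, $\cH^{\beta}(X)$ is merely a union and may be non-complete. It is still a metric space, so $\cH(\cH^{\beta}(X))$ and the closure are taken inside $\cH^{\beta}(Y)$; the singleton identification is unaffected and Theorem~\ref{thm:cH} applies to arbitrary metric spaces. I also need that the metrics on the embedded copies agree, which holds because the embedding into the hyperspace is isometric. With compatibility in hand, the pairs $(F_1^{\beta}, F_2^{\beta})$ form towers of maps, and the union argument closes the induction.

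Finally, at level $\alpha$, Proposition~\ref{prop:def} gives
$$2d_{GH}\bigl(\cH^{\alpha}(X), \cH^{\alpha}(Y)\bigr) \le \max\bigl\{\dis(f_1), \dis(f_2), \codis(f_1, f_2)\bigr\}.$$
Taking the infimum over bounded $f_1, f_2$ and using Assertion~\ref{ass:GH_dfn} yields $d_{GH}\bigl(\cH^{\alpha}(X), \cH^{\alpha}(Y)\bigr) \le d_{GH}(X, Y)$.
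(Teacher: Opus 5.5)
Your proposal is correct and follows essentially the same route as the paper: transfinite induction with $A \mapsto \overline{F^{\beta}_{j}(A)}$ at successor steps, controlled by Theorem~\ref{thm:cH}, and unions at limit steps, controlled as in Lemma~\ref{lem:R_varinjlim}; the paper phrases the bookkeeping through the correspondences $R_{f^{\beta}_1, f^{\beta}_2}$, whereas you track $\dis$ and $\codis$ of the maps directly. Your explicit check that $F^{\beta+1}_j(\{x\}) = \{F^{\beta}_j(x)\}$, which makes the limit-stage union well defined, fills in a compatibility point the paper leaves implicit, and taking the infimum over all bounded pairs avoids the paper's slip of writing $2d_{GH}(X,Y) - \e$ where $2d_{GH}(X,Y) + \e$ is meant.
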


\begin{proof}
Assume without loss of generality that $d_{GH}(X, Y) < \infty$, since otherwise there is nothing to prove.
Choose $\e > 0$ and consider maps $f_1 \: X \to Y, \ f_2 \: Y \to X$ such that
$\max\bigl\{\dis(f_1), \ \dis(f_2), \ \codis(f_1, f_2)\bigr\} \le 2d_{GH}(X, Y) - \e$.
We define by induction bounded maps $f^{\gamma}_{1} \: \cH^{\gamma}(X) \to \cH^{\gamma}(Y)$ and $f^{\gamma}_{2} \: \cH^{\gamma}(Y) \to \cH^{\gamma}(X)$, where $\gamma \le \alpha$.

Thus, if $\gamma = \beta + 1$, then we define for $j = 1, 2$
$$f^{\gamma}_{j} \: \cH^{\gamma}(X) \to \cH^{\gamma}(Y), \ A \mapsto \overline{f^{\beta}_{j}(A)},$$
where the right-hand side denotes the closure of the image of $A$ under the map $f^{\beta}_{j}$.

If $\gamma$ is a limit ordinal, then we define
$$f^{\gamma}_{1} \: \cH^{\gamma}(X) \to \cH^{\gamma}(Y), \ x \mapsto f^{\beta}_{1}(x), \quad \text{where the ordinal } \beta < \gamma \text{ is such that } x \in \cH^{\beta}(X),$$
$$f^{\gamma}_{2} \: \cH^{\gamma}(Y) \to \cH^{\gamma}(X), \ y \mapsto f^{\beta}_{2}(y), \quad \text{where the ordinal } \beta < \gamma \text{ is such that } y \in \cH^{\beta}(Y).$$
Thus we obtain an $\alpha$-tower of correspondences $\{R_{\beta}\}_{\beta < \alpha}, \ R_{\beta} \in \cR\bigl(\cH^{\beta}(X), \cH^{\beta}(Y)\bigr)$, where $R_{\beta} := R_{f^{\beta}_{1}, f^{\beta}_{2}}$.
Then from Lemma~\ref{lem:R_varinjlim} and Theorem~\ref{thm:cH} we obtain by induction on $\beta$ that $\dis(R_{\beta}) \le 2d_{GH}(X, Y) - \e$, whence, letting $\e \to 0$, we obtain the required result by taking $\beta = \alpha$.
\end{proof}

\begin{prop}
Let $X$ be a metric space, let $\alpha \ge \omega$ be an ordinal number.
Then $\tr_{\cH^{\alpha}(X)}(\aleph_{0}) = \diam(X)$.
\end{prop}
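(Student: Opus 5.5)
The plan is to prove the two inequalities $\tr_{\cH^{\alpha}(X)}(\aleph_0) \le \diam(X)$ and $\tr_{\cH^{\alpha}(X)}(\aleph_0) \ge \diam(X)$ separately.

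\emph{Upper bound.} This follows at once from item~\ref{prop:properties:1} of Proposition~\ref{prop:properties} together with the remark $\diam(\cH^{\alpha}(X)) = \diam(X)$.

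\emph{Lower bound: setup.} If $\diam(X) = 0$ the bound is trivial, so assume $X$ has at least two points. Fix any $r$ with $0 < r < \diam(X)$; if $\diam(X) = \infty$, $r$ is an arbitrary positive number. Choose $x, y \in X$ with $D := d_X(x,y) \ge r$. I will exhibit a countably infinite subset $Q \subset \cH^{\omega}(X)$ that is $D$-separated and has diameter at most $D$. Since $\cH^{\omega}(X) \subset \cH^{\alpha}(X)$ is an isometric subspace of the tower, the same $Q$ works in $\cH^{\alpha}(X)$. Then $\tr_{\cH^{\alpha}(X)}(\aleph_0) \ge D \ge r$, and letting $r \to \diam(X)$ finishes the proof.

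\emph{Construction.} Define finite sets $Q_n \subset \cH^{n}(X)$ inductively by $Q_0 = \{x, y\}$ and $Q_{n+1} = 2^{Q_n} \setminus \{\emptyset\}$. Each element of $Q_{n+1}$ is a finite subset of $\cH^n(X)$, hence closed and bounded, so $Q_{n+1} \subset \cH^{n+1}(X)$. Under the singleton identification $z \mapsto \{z\}$ we have $Q_n \subset Q_{n+1}$. By induction on $n$ I will check two facts.
\begin{enumerate}
\item $Q_n$ is $D$-separated. This is the argument of Proposition~\ref{prop:H_trace}. If $A \ne B$ are nonempty subsets of a $D$-separated set, then some point lies in, say, $A \setminus B$. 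Its distance to every point of $B$ is at least $D$, so $d^H(A,B) \ge D$.
\item $\diam(Q_n) \le D$. The Hausdorff distance between two nonempty subsets of a set of diameter $\le D$ is at most $D$.
\end{enumerate}
The cardinalities grow as $\#Q_{n+1} = 2^{\#Q_n} - 1$, so $\#Q_n \to \infty$.

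\emph{Conclusion.} Put $Q = \bigcup_n Q_n \subset \cH^{\omega}(X)$. It is countably infinite. Any two of its points lie in a common $Q_n$, and the metric of $\cH^{\omega}(X)$ restricts to that of $\cH^n(X)$. Hence $Q$ is $D$-separated and bounded by $D$, which is exactly what the setup required.

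The only delicate point is bookkeeping. The identifications $Q_n \subset Q_{n+1}$ must be compatible with the inclusions of the tower, and the metric on the union must be the induced one; both hold by the definition of the Hausdorff tower. Otherwise the argument is routine.
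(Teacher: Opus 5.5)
Your proof is correct and follows the paper's own argument. The upper bound comes from $\tr \le \diam$ together with $\diam(\cH^{\alpha}(X)) = \diam(X)$. The lower bound comes from the iterated power-set construction $Q_0 = \{x,y\}$, $Q_{n+1} = 2^{Q_n}\setminus\{\emptyset\}$, whose union in $\cH^{\omega}(X)$ is a countable bounded $d_X(x,y)$-separated set. You are slightly more careful than the paper in excluding $\emptyset$, verifying boundedness of $Q$, and handling $\diam(X)=\infty$ explicitly, but the route is the same.
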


\begin{proof}
Assume that $X$ is bounded. The case of an unbounded $X$ is treated similarly.
Since $\tr_{\cH^{\omega}(X)}(\aleph_{0}) \le \tr_{\cH^{\alpha}(X)}(\aleph_{0}) \le \diam\bigl(\cH^{\alpha}(X)\bigr) = \diam(X)$, it is enough to prove that
$\tr_{\cH^{\alpha}(X)}(\aleph_{0}) \ge \diam(X)$ for $\alpha = \omega$.

If $X = \{pt\}$, then there is nothing to prove.
Assume that $\diam(X) \ne 0$ and, choosing $\e > 0$, consider points $x, y \in X$ such that $r := d_{X}(x, y) > \diam(X) - \e$.
Consider the family of subsets $\{Q^{(n)}\}_{n \in \N \cup \{0\}}, \ Q^{(n)} \subset \cH^{n}(X)$, given as follows. Set $Q^{(0)} = \{x, y\}$ and define $Q^{(n)} = 2^{Q^{(n - 1)}}$ for $n \ge 1$.
Also set $Q = \bigcup_{n \in \N} Q^{(n)}$, then the set $Q$ is $r$-separated and $\#Q = \aleph_{0}$. Hence, $\tr_{\cH^{\omega}(X)}(\aleph_{0}) \ge r > \diam(X) - \e$.
Letting $\e \to 0$, we obtain $\tr_{\cH^{\omega}(X)}(\aleph_{0}) \ge \diam(X)$.
\end{proof}

The following proposition is proved in the same way as the previous one.

\begin{prop}
Let $X$ be a metric space, $\alpha$ be an ordinal number.
Then $\tr_{\cH^{\alpha}(X)}(\beth_{\alpha}) \ge \tr_{X}(\aleph_{0})$.
\end{prop}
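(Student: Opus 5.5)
The plan is a transfinite induction on $\alpha$. The goal is to build, for every $r < \tr_X(\aleph_0)$, a bounded $r$-separated subset of $\cH^{\alpha}(X)$ of cardinality $\beth_\alpha$. If $\tr_X(\aleph_0)=0$ there is nothing to prove, so fix $0<r<\tr_X(\aleph_0)$. Take a bounded $r$-separated set $Q^{(0)}\subset X$ with $\#Q^{(0)}=\aleph_0=\beth_0$.

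\emph{Successor step.} Given $Q^{(\beta)}\subset\cH^{\beta}(X)$, set $Q^{(\beta+1)}=2^{Q^{(\beta)}}\setminus\{\emptyset\}$, exactly as in Proposition~\ref{prop:H_trace}. Every subset of an $r$-separated set is closed, since all its points are isolated. It is also bounded, so it lies in $\cH(\cH^\beta(X))=\cH^{\beta+1}(X)$. Two distinct nonempty subsets $A\ne B$ of an $r$-separated set are at Hausdorff distance $\ge r$: a point of $A\setminus B$ is at distance $\ge r$ from every point of $B$. Moreover $d^H(A,B)\le\diam Q^{(\beta)}$, so the new family is bounded. Its cardinality is $2^{\beth_\beta}=\beth_{\beta+1}$.

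\emph{Embedding compatibility.} To pass through limits I want the sets to be nested, $Q^{(\beta)}\subset Q^{(\beta+1)}$. Under the identification $x\mapsto\{x\}$, each point $q\in Q^{(\beta)}$ corresponds to the singleton $\{q\}\in Q^{(\beta+1)}$, so the nesting holds. Because this embedding is isometric, separation is preserved across levels. The diameter of the whole family never exceeds $\diam Q^{(0)}$, since Hausdorff distances between subsets of a set are bounded by that set's diameter. By induction every $Q^{(\beta)}$ has diameter $\le\diam Q^{(0)}<\infty$, which keeps boundedness uniform.

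\emph{Limit step.} For a limit $\gamma$, put $Q^{(\gamma)}=\bigcup_{\beta<\gamma}Q^{(\beta)}\subset\cH^\gamma(X)$. Any two points lie in a common $Q^{(\beta)}$, so the union is $r$-separated, with diameter $\le\diam Q^{(0)}$. Its cardinality is $\sup_{\beta<\gamma}\beth_\beta=\beth_\gamma$, because the $Q^{(\beta)}$ increase and have cardinality $\beth_\beta$. This gives $\tr_{\cH^\alpha(X)}(\beth_\alpha)\ge r$, and letting $r\to\tr_X(\aleph_0)$ finishes the proof.

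The main obstacle is the bookkeeping for the identifications. The tower is formed via the embeddings $X\subset\cH(X)$, and one must check two things. First, at a successor of a limit, $Q^{(\gamma+1)}$ consists of closed bounded subsets of the union $\cH^\gamma(X)$. Second, the singletons of $Q^{(\gamma)}$ recover $Q^{(\gamma)}$ itself inside $\cH^{\gamma+1}(X)$, so that nesting and cardinality are preserved. Both follow from the fact that the inclusions in the tower are isometric and that discrete bounded subsets are closed.
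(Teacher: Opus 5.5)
Your proposal is correct and matches the paper's argument. The paper proves this ``in the same way'' as the preceding proposition: start from a bounded $r$-separated countable set, iterate nonempty power sets $Q^{(\beta+1)}=2^{Q^{(\beta)}}\setminus\{\emptyset\}$ through the tower, and take unions at limit stages, exactly as you do (you are in fact more careful than the paper about excluding $\emptyset$ and about uniform boundedness). One small correction: subsets of $Q^{(\beta)}$ are closed because $Q^{(\beta)}$ is $r$-separated, so any convergent sequence in it is eventually constant. Having isolated points, or being ``discrete and bounded'', is not enough: $\{1/n\}_{n\in\N}\subset\R$ is discrete and bounded but not closed.
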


\begin{cor}\label{cor:double_tr}
Let $X$ be a metric space, $\alpha$ be an ordinal number.
Then $\tr_{\cH^{\omega + \alpha}(X)}(\beth_{\alpha}) = \diam(X)$.
\end{cor}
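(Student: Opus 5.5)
The upper bound is immediate. By item~\ref{prop:properties:1} of Proposition~\ref{prop:properties} and the remark that every level of the tower has the same diameter,
$$\tr_{\cH^{\omega+\alpha}(X)}(\beth_{\alpha}) \le \diam\bigl(\cH^{\omega+\alpha}(X)\bigr) = \diam(X).$$
So the real content is the lower bound $\tr_{\cH^{\omega+\alpha}(X)}(\beth_{\alpha}) \ge \diam(X)$. I will get it by chaining the two preceding propositions, regarding the tower starting at level $\omega$ as a new Hausdorff tower.

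The key identification is $\cH^{\omega+\beta}(X) = \cH^{\beta}\bigl(\cH^{\omega}(X)\bigr)$ for every ordinal $\beta$, with the same metrics. I prove it by transfinite induction on $\beta$.
\begin{itemize}
\item For $\beta=0$ the two sides are literally equal.
\item At a successor $\beta+1$, both sides are obtained by applying $\cH$ to equal spaces, since $\omega+(\beta+1)=(\omega+\beta)+1$.
\item At a limit $\beta$, the ordinal $\omega+\beta$ is a limit and equals $\sup_{\delta<\beta}(\omega+\delta)$. Hence
$$\cH^{\omega+\beta}(X)=\bigcup_{\gamma<\omega+\beta}\cH^{\gamma}(X)=\bigcup_{\delta<\beta}\cH^{\omega+\delta}(X),$$
because the levels are increasing and cofinality lets us restrict the union to the indices $\omega+\delta$. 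The right-hand side equals $\bigcup_{\delta<\beta}\cH^{\delta}(\cH^{\omega}(X))=\cH^{\beta}(\cH^{\omega}(X))$ by the induction hypothesis.
\end{itemize}
The metrics agree because all of them are induced Hausdorff metrics along the same nested inclusions.

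Now put $Y=\cH^{\omega}(X)$. The previous proposition, applied to $Y$ and $\alpha$, gives
$$\tr_{\cH^{\alpha}(Y)}(\beth_{\alpha}) \ge \tr_{Y}(\aleph_0).$$
The proposition before it (with $\alpha=\omega$) gives $\tr_{Y}(\aleph_0)=\diam(X)$. Combining these with the identification $\cH^{\alpha}(Y)=\cH^{\omega+\alpha}(X)$ yields
$$\tr_{\cH^{\omega+\alpha}(X)}(\beth_{\alpha}) \ge \diam(X).$$
Together with the upper bound, this proves the equality.

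The main obstacle is the tower identification at limit ordinals. One has to check the ordinal arithmetic fact that $\{\omega+\delta:\delta<\beta\}$ is cofinal in $\omega+\beta$, and that a union over a cofinal subfamily of an increasing chain equals the full union.

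A second subtlety concerns the unbounded case, where $\diam(X)=\infty$. Here the bounded trace requires bounded separated sets, so I should confirm that the two cited propositions still apply. For $Y$ this should follow from the bounded construction in those propositions (the first one was stated as being handled "similarly" in this case). If that fails, I would fall back on working with $\Tr$ in place of $\tr$ for unbounded $X$, or on restricting to bounded $X$, where item~\ref{prop:properties:4} makes $\tr=\Tr$.
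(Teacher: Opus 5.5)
Your proposal is correct and follows exactly the route the paper intends: the corollary is stated without proof right after the two propositions, and the implied argument is to apply the $\beth_\alpha$-proposition to $Y=\cH^{\omega}(X)$, use the $\aleph_0$-proposition at level $\omega$, identify $\cH^{\alpha}(\cH^{\omega}(X))$ with $\cH^{\omega+\alpha}(X)$, and bound from above by the diameter. Your transfinite-induction check of that identification fills in a detail the paper leaves implicit. You can drop your worry about the unbounded case: starting from a bounded separated $Q$, every iterated family of nonempty subsets has diameter at most $\diam(Q)$, so the bounded trace $\tr$ works throughout and no fallback to $\Tr$ is needed.
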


\begin{thm} \label{thm:dens_plus1}
Let $X$ be a metric space, $\alpha \ge \dens(X) + 1$ be an ordinal number.
Then $2d_{GH}\bigl(X, \cH^{\alpha}(X)\bigr) = \diam(X)$.
\end{thm}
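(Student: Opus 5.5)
We prove the two inequalities separately; the work is in the lower bound, which we get from Theorem~\ref{thm:main} at the cardinal $\vk = \beth_{\dens(X)}$, or more precisely any infinite cardinal exceeding $\dens(X)$ that is reachable inside the tower.

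\emph{Upper bound.} Since $\diam(\cH^{\alpha}(X)) = \diam(X)$, the general estimate $d_{GH}(X,Y) \le \frac12\max\{\diam X, \diam Y\}$ gives $2d_{GH}(X,\cH^{\alpha}(X)) \le \diam(X)$. For unbounded $X$ the claim reads $\infty = \infty$; the lower bound below still produces $\infty$, by the same reasoning as case 2 of Theorem~\ref{thm:main}.

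\emph{Lower bound.} Let $\delta = \dens(X)$, viewed as an initial ordinal, so $\alpha \ge \delta + 1$. If $X$ is finite, then $\delta = \#X = n$ and $\alpha \ge n+1$. In that case take $\vk = n+1$: item~\ref{prop:properties:9} of Proposition~\ref{prop:properties} gives $\tr_X(n+1) = 0$. On the other side, iterating Proposition~\ref{prop:H_trace} from a diametral pair $\{x,y\}$ produces, already in $\cH^{1}(X)$ or $\cH^{2}(X)$, an $r$-separated set with at least $n+1$ points and $r > \diam(X) - \e$ (since $2^{2}-1 = 3$, $2^{3}-1 = 7$, and so on, and $n+1 \le 2^n-1$ for $n\ge2$). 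Hence $\tr_{\cH^{\alpha}(X)}(n+1) \ge \diam(X)$, using monotonicity in the subspace, item~\ref{prop:properties:6}.

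If $X$ is infinite, take $\vk = \delta^{+} > \delta$. Then $\tr_X(\vk) = 0$ by item~\ref{prop:properties:9}. For the tower side, write $\alpha \ge \omega + \beta$ for a suitable $\beta$ with $\beth_\beta \ge \delta^{+}$; for instance $\beta = \delta$ works when $\delta + 1 \ge \omega + \delta$, which is automatic because $\omega + \delta = \delta$ for infinite cardinals $\delta$. Then Corollary~\ref{cor:double_tr} gives $\tr_{\cH^{\omega+\beta}(X)}(\beth_\beta) = \diam(X)$. Monotonicity in the cardinal (item~\ref{prop:properties:5}), together with the subspace inclusion $\cH^{\omega+\beta}(X) \subset \cH^{\alpha}(X)$, then yields $\tr_{\cH^{\alpha}(X)}(\vk) \ge \diam(X)$. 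Theorem~\ref{thm:main} now gives $2d_{GH} \ge \diam(X) - 0$.

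\emph{Main obstacle.} The delicate point is the ordinal bookkeeping. We must check that $\beth_\delta > \delta$, which holds since $\beth_\gamma \ge \gamma$ in general and here $\beth_\delta \ge 2^{\delta} > \delta$. We must also see why the hypothesis demands $\delta+1$ rather than $\delta$. The reason is that Corollary~\ref{cor:double_tr} needs the successor step to actually produce a set of size $2^{\delta}$ at level $\omega + \delta + 1 = \delta+1$, or at level $\delta$ when that is a successor configuration. Concretely, the set at level $\delta+1$ is $2^{Q}$ with $Q$ of size $\beth_\delta \ge \delta$; its cardinality is $2^{\#Q} > \delta$, and it remains bounded and $r$-separated by Proposition~\ref{prop:H_trace}. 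This final successor step is the one the proof relies on.

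The unbounded case needs separate care. Here we would use $\Tr$ and Theorem~\ref{thm:main2}, and the idea is to take a point pair at distance $M \to \infty$ in place of the diametral pair and run the same construction.
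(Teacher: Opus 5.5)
Your route is the paper's: apply Theorem~\ref{thm:main} at a cardinal $\vk>\dens(X)$. On the $X$ side, $\tr_X(\vk)=0$ by item~\ref{prop:properties:9}. On the tower side, $\tr_{\cH^{\alpha}(X)}(\vk)=\diam(X)$ follows from Corollary~\ref{cor:double_tr} and monotonicity.

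However, the ordinal bookkeeping that you yourself single out as the crux contains two false steps.

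First, $\omega+\delta=\delta$ holds for uncountable $\delta$ but not for $\delta=\omega$, where $\omega+\omega=\omega\cdot 2$. Take an infinite separable $X$ and, say, $\alpha=\omega+1$. Then neither the level $\omega+\delta=\omega\cdot 2$ (your choice $\beta=\delta$) nor the level $\omega+\delta+1=\omega\cdot 2+1$ (your last paragraph) is $\le\alpha$. So neither $\cH^{\omega\cdot 2}(X)$ nor $\cH^{\omega\cdot 2+1}(X)$ sits inside $\cH^{\alpha}(X)$. The separable case must be handled separately with $\beta=1$: $\tr_{\cH^{\omega+1}(X)}(\beth_1)=\diam(X)$ and $\beth_1>\aleph_0=\dens(X)$. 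This is exactly the paper's case~2.

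Second, $\beth_\delta\ge 2^{\delta}$ is false in general. The map $\gamma\mapsto\beth_\gamma$ is normal, so it has fixed points $\delta=\beth_\delta$. For such a density, Corollary~\ref{cor:double_tr} with $\beta=\delta$ produces only $\beth_\delta=\delta$ points. Monotonicity in $\vk$ runs the wrong way to reach $\delta^{+}$ from there.

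This is precisely what the ``$+1$'' in the hypothesis is for. For uncountable $\delta$ take $\beta=\delta+1$: the level is $\omega+\delta+1=\delta+1\le\alpha$, and $\beth_{\delta+1}=2^{\beth_\delta}\ge 2^{\delta}>\delta$. That is what your final paragraph actually describes, and it contradicts your earlier claim. Alternatively, as the paper does, take $\beta=\alpha$, using $\omega+\alpha=\alpha$ and $\beth_\alpha\ge\beth_{\delta+1}\ge 2^{\delta}$.

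The finite case needs a smaller repair. Iterating from a diametral pair gives $\diam(X)$-separated families of $2,3,7,127,\dots$ points. So for $\#X=n\ge 7$ the required $n+1$ points do not appear ``already in $\cH^{1}(X)$ or $\cH^{2}(X)$''. The inequality $n+1\le 2^{n}-1$ is beside the point, since $X$ itself is not $\diam(X)$-separated. What is true is that level $k$ contains a $\diam(X)$-separated set of at least $k+2$ points; this is the paper's induction $\tr_{\cH^{k}(X)}(k+1)=\diam(X)$. The hypothesis $\alpha\ge n+1$ supplies enough levels.

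With the split into finite, separable and non-separable cases, and these choices of $\beta$, your argument coincides with the paper's proof. For unbounded $X$ you do not need $\Tr$ or Theorem~\ref{thm:main2}. The sets built from a pair at distance $M$ are bounded, so the bounded trace already gives $\infty$.
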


\begin{proof}
Assume that $\#X \ge 2$, since otherwise there is nothing to prove.
It is enough for us to prove the inequality
$2d_{GH}\bigl(X, \cH^{\alpha}(X)\bigr) \ge \diam(X)$, since the reverse inequality holds automatically:
$$2d_{GH}\bigl(X, \cH^{\alpha}(X)\bigr) \le \max\Bigl\{ \diam(X), \ \diam\bigl(\cH^{\alpha}(X)\bigr)\Bigr\} = \diam(X).$$
Consider three cases.
\begin{enumerate}
\item
Let $X$ be a finite metric space.
Then $\dens(X) = \# X$.
Let us show that for every $n \in \N$ we have
$$\tr_{\cH^{n}(X)}(n + 1) = \diam(X) > 0.$$
We verify this by induction on $n$.
Indeed, if $n = 1$, then $\tr_{\cH^{1}(X)}(2) = \diam(X)$,
and if it is already proved for $n$, then by items~\ref{prop:properties:1} and~\ref{prop:properties:5} of Proposition~\ref{prop:properties}, as well as by Proposition~\ref{prop:H_trace} we obtain
$$\diam(X) \ge \tr_{\cH^{n + 1}(X)}(n + 2) \ge \tr_{\cH^{n + 1}(X)}\bigl(2^{n + 1} - 1\bigr) \ge \tr_{\cH^{n}(X)}(n + 1) = \diam(X),$$
whence it follows that $\tr_{\cH^{n + 1}(X)}(n + 2) = \diam(X)$.
Then for a finite $\alpha \ge \dens(X) + 1 = \#X + 1$, substituting $n = \alpha$ into the verified equality and noting that $\alpha + 1 \ge \#X + 2$, by item~\ref{prop:properties:5} of Proposition~\ref{prop:properties} we obtain
$$\tr_{\cH^{\alpha}(X)}(\# X + 2) = \diam(X).$$
For $\alpha \ge \omega$ the same equality follows from the monotonicity of the trace under inclusion of subspaces (item~\ref{prop:properties:6} of Proposition~\ref{prop:properties}) and the upper bound from item~\ref{prop:properties:1}.
On the other hand, $\tr_{X}(\# X + 2) = 0$, whence, by Theorem~\ref{thm:main}, we obtain that
$$2d_{GH}\bigl(X, \cH^{\alpha}(X)\bigr) \ge \bigl| \tr_{\cH^{\alpha}(X)}(\# X + 2) - \tr_{X}(\# X + 2) \bigr| = \diam(X).$$

\item
Let $X$ be an infinite separable space.
Then $\# X \ge \aleph_{0}, \ \dens(X) = \aleph_{0}$.
Then from the condition we obtain $\alpha \ge \omega + 1$.
Hence, by items~\ref{prop:properties:1} and~\ref{prop:properties:5} of Proposition~\ref{prop:properties} and by Corollary~\ref{cor:double_tr} we obtain that
$$\diam(X) \ge \tr_{\cH^{\alpha}(X)}(\beth_{1}) \ge \tr_{\cH^{\omega + 1}(X)}(\beth_{1}) = \diam(X),$$
whence it follows that $\tr_{\cH^{\alpha}(X)}(\beth_{1}) = \diam(X)$.

On the other hand, since $\dens(X) = \beth_{0} < \beth_{1}$, by item~\ref{prop:properties:9} of Proposition~\ref{prop:properties} we obtain that
$\tr_{X}(\beth_{1}) = 0$.
Consequently, by Theorem~\ref{thm:main} we obtain
$$2d_{GH}\bigl(X, \cH^{\alpha}(X)\bigr) \ge \bigl| \tr_{\cH^{\alpha}(X)}(\beth_{1}) - \tr_{X}(\beth_{1}) \bigr| = \diam(X).$$

\item
Let $X$ be an infinite nonseparable space.
Then $\alpha \ge \dens(X) + 1$ is an uncountable ordinal, therefore $\omega + \alpha = \alpha$ and $$\beth_{\alpha} \ge \beth_{\dens(X) + 1} = 2^{\beth_{\dens(X)}} \ge 2^{\dens(X)}.$$
Hence, by Corollary~\ref{cor:double_tr} we obtain
$$\diam(X) \ge \tr_{\cH^{\alpha}(X)}\bigl(2^{\dens(X)}\bigr) \ge \tr_{\cH^{\alpha}(X)}(\beth_{\alpha}) = \tr_{\cH^{\omega + \alpha}(X)}(\beth_{\alpha}) = \diam(X),$$
that is, $$\tr_{\cH^{\alpha}(X)}\bigl(2^{\dens(X)}\bigr) = \diam(X).$$
On the other hand, $\tr_{X}\bigl(2^{\dens(X)}\bigr) = 0$.
By Theorem~\ref{thm:main} we obtain
$$2d_{GH}\bigl(X, \cH^{\alpha}(X)\bigr) \ge \Bigl|
\tr_{\cH^{\alpha}(X)}\bigl(2^{\dens(X)}\bigr) - \tr_{X}\bigl(2^{\dens(X)}\bigr) \Bigr| = \diam(X).$$
\end{enumerate}
\end{proof}

\section{Gromov--Hausdorff distance between $l^p$-products of metric spaces}

\subsection{Estimate of traces for $l^p$-products of metric spaces}

In this section we estimate traces for $l^p$-products of metric spaces, and also express the trace of the $l^{\infty}$-product in terms of the traces of its factors.

\begin{dfn} Let $\{(X_{n}, d_{X_{n}})\}_{n=1}^{\infty}$ be a sequence of metric spaces such that $\sum_{n=1}^{\infty}(\diam(X_n))^{p} < \infty$, where $1 \le p < \infty$. Then on the Cartesian product $X = \prod_{n=1}^{\infty} X_{n}$ we introduce the metric
$$ d_{X}\bigl(\{x_n\}_{n=1}^{\infty}, \{y_n\}_{n=1}^{\infty} \bigr) = \Biggl(\sum_{n = 1}^{\infty} \bigl(d_{X_{n}}(x_{n}, y_{n}) \bigr)^{p} \Biggr)^{1/p}$$ and we call the resulting metric space $X = (l^{p})\prod_{n} X_{n}$ the $l^p$-\emph{product} of the spaces $X_n$.
\end{dfn}

\begin{dfn}
Let $\{(X_{i}, d_{X_{i}})\}_{i \in I}$ be an arbitrary family of metric spaces such that $\sup_{i \in I} \diam(X_{i})<\infty$.
Then on the Cartesian product $X = \prod_{i \in I} X_{i}$ we introduce the metric
$$d_{X}\bigl(\{x_{i}\}_{i \in I}, \{y_{i}\}_{i \in I} \bigr) = \sup_{i \in I} d_{X_{i}}(x_{i}, y_{i})$$ and we call the resulting metric space $X = (l^{\infty})\prod_{i \in I} X_{i}$ the $l^{\infty}$-\emph{product} of the spaces $X_{i}$.
\end{dfn}

Note that the diameter of $l^p$-products (where $1 \le p \le \infty$) is computed as
$$\diam\biggl((l^p)\prod_{n=1}^{\infty} X_{n}\biggr) = \Bigl\| \bigl\{ \diam(X_{n}) \bigr\}_{n \in \N} \Bigr\|_{l^{p}}.$$

\begin{thm}\label{thm:tr_prod_1}
\begin{enumerate}
\item
Let $\{X_{n}, d_{X_{n}}\}_{n = 1}^{\infty}$ be a sequence of metric spaces such that
$\sum_{n=1}^{\infty}(\diam(X_n))^{p} < \infty$, where $1 \le p < \infty$.
Set $X = (l^{p})\prod_{n} X_{n}$.
Then for every cardinal number $\vk \ge 2$ we have
$$\tr_{X}(\vk) \ge \Biggl( \sum_{n = 1}^{\infty} \bigl(\tr_{X_{n}}(\vk)\bigr)^{p} \Biggr)^{1/p}.$$
\item
Let $\{(X_{i}, d_{X_{i}})\}_{i \in I}$ be an arbitrary family of metric spaces such that $\sup_{i \in I} \diam(X_{i})<\infty$. Set $X = (l^{\infty})\prod_{i \in I} X_{i}$.
Then for every cardinal number $\vk \ge 2$ we have
$$\tr_{X}(\vk) \ge \sup_{i \in I} \tr_{X_{i}}(\vk).$$
\end{enumerate}
\end{thm}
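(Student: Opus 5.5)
The plan is to build an explicit separated set of cardinality $\vk$ in the product out of separated sets in the factors, aligned along a common index set of size $\vk$. Fix an index set $K$ with $\# K = \vk$.

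For item 1, fix $\e > 0$. Let $N = \{n : \tr_{X_n}(\vk) > 0\}$. For each $n \in N$, choose a number $r_n$ with $\tr_{X_n}(\vk) - \e_n < r_n \le \tr_{X_n}(\vk)$, where $\e_n > 0$ will be chosen later. Then choose an $r_n$-separated set $Q_n \subset X_n$ with $\# Q_n = \vk$. It is automatically bounded, since $\diam(X_n) < \infty$ follows from $\sum (\diam X_n)^p < \infty$. Enumerate it by a bijection $\varphi_n \: K \to Q_n$. For $n \notin N$, fix an arbitrary point $x_n^0 \in X_n$ and let $\varphi_n$ be the constant map $k \mapsto x_n^0$.

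Define $\Phi \: K \to X$ by $\Phi(k) = \{\varphi_n(k)\}_{n}$. For $k \ne k'$ we get
$$d_X\bigl(\Phi(k), \Phi(k')\bigr) \ge \Bigl(\sum_{n \in N} r_n^{p}\Bigr)^{1/p}.$$
If $N \ne \emptyset$, this is positive, so $\Phi$ is injective and $Q = \Phi(K)$ has cardinality $\vk$. The set $Q$ is bounded because $X$ has finite diameter. If $N = \emptyset$, the right-hand side of the claim is $0$ and there is nothing to prove.

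Choosing the $\e_n$ so that $\sum_{n\in N} r_n^p \ge \sum_{n} \tr_{X_n}(\vk)^p - \e$ is possible, because $\tr_{X_n}(\vk) \le \diam X_n$ makes the series converge. For instance, take a finite $M$ with the tail $\sum_{n > M} \tr_{X_n}(\vk)^p < \e/2$, and choose $r_n$ close to $\tr_{X_n}(\vk)$ only for $n \le M$. Even simpler, we may choose the $r_n$ freely since only a lower bound is needed. Hence $\tr_X(\vk) \ge \bigl(\sum_n \tr_{X_n}(\vk)^p - \e\bigr)^{1/p}$, and letting $\e \to 0$ gives the claim.

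One subtlety is that the supremum in the definition of $\tr$ may not be attained, which is why we use $r_n < \tr_{X_n}(\vk)$. Another is that each $r_n$ must be positive so that an $r_n$-separated set of cardinality $\vk$ actually exists. Both are handled by the set $N$ and the approximation above.

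For item 2, fix $i_0 \in I$ with $\tr_{X_{i_0}}(\vk) > 0$; otherwise the claim is trivial. Take an $r$-separated set $Q_{i_0} \subset X_{i_0}$ of cardinality $\vk$, where $r$ is close to $\tr_{X_{i_0}}(\vk)$. Embed it into $X$ by fixing base points $x_i^0 \in X_i$ for all $i \ne i_0$; this uses the axiom of choice. The slice map $X_{i_0} \to X$ is an isometric embedding for the sup metric. So $\tr_X(\vk) \ge \tr_{X_{i_0}}(\vk)$ by item~\ref{prop:properties:6} of Proposition~\ref{prop:properties}, and boundedness is automatic since $\sup_i \diam X_i < \infty$. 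Taking the supremum over $i_0$ finishes the proof. Item 1 could similarly be organized via product enumeration, but there is no real obstacle beyond the approximation bookkeeping for the $r_n$.
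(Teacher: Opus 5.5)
Your proof is correct and follows the paper's route: enumerate $r_n$-separated sets $Q_n\subset X_n$ of cardinality $\vk$ by a common index set and take the diagonal set in the product. For item~2 the paper only says ``similarly'', and your single-factor slice embedding is the natural special case of that construction. Your use of constant coordinates on the factors with $\tr_{X_n}(\vk)=0$ is more careful than the paper. The paper picks $r_n<\tr_{X_n}(\vk)$ without addressing this case, where $X_n$ may not even contain $\vk$ points. Just take $0<r_n<\tr_{X_n}(\vk)$ strictly, as your own later remark says, rather than the $r_n\le\tr_{X_n}(\vk)$ written in your first paragraph, since an $r_n$-separated set of cardinality $\vk$ need not exist at $r_n=\tr_{X_n}(\vk)$.
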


\begin{proof}
We prove only the first item, since the second one is obtained similarly.
Choose $\e > 0$ and consider numbers $r_{n} < \tr_{X_{n}}(\vk)$, $n \in \N$, such that
$$r := \bigl\|\{r_{n}\}_{n \in \N}\bigr\|_{l^{p}} \ge \Bigl\|\bigl\{\tr_{X_{n}}(\vk)\bigr\}_{n \in \N}\Bigr\|_{l^{p}} - \e.$$
Let $Q_{n} \subset X_{n}$ be an $r_{n}$-separated set of cardinality $\# Q_{n} = \vk$ from the definition of the trace.
Let us enumerate the elements of these sets: $Q_{n} = \{q_{n, \alpha}\}_{\alpha \in A}$, where $\# A = \vk$, with $q_{n, \alpha} \ne q_{n, \beta}$ for $\alpha \ne \beta$.
Note that then the set
$$Q := \biggl\{ \{q_{n, \alpha}\}_{n \in \N} \ \Big| \ \alpha \in A \biggr\} \subset \prod_{n \in \N} Q_{n} \subset X$$
has cardinality $\# Q = \vk$. We verify that $Q$ is $r$-separated.
Indeed, consider two arbitrary distinct points $q, q' \in Q$, then for some $\alpha, \beta \in A$ we have $q = \{q_{n, \alpha}\}_{n \in \N}, \ q' = \{q_{n, \beta}\}_{n \in \N}$.
Therefore, by the $r_{n}$-separatedness of the set $Q_{n}$ we have
$$d_{X}(q, q') = \Bigl\| \bigl\{d_{X_{n}}(q_{n, \alpha}, q'_{n, \beta}) \bigr\}_{n \in \N} \Bigr\|_{l^{p}} \ge \bigl\| \{r_{n}\}_{n \in \N} \bigr\|_{l^{p}} = r.$$
Hence, $$\tr_{X}(\vk) \ge \Bigl\|\bigl\{\tr_{X_{n}}(\vk)\bigr\}_{n \in \N}\Bigr\|_{l^{p}} - \e.$$
Letting $\e \to 0$, we obtain the required result.
\end{proof}

\begin{thm}
Let $\{(X_{i}, d_{X_{i}})\}_{i \in I}$ be an arbitrary family of metric spaces such that $\sup_{i \in I} \diam(X_{i})<\infty$.
For every nonempty subset $J \subset I$ we set
$$X_{J} = (l^{\infty}) \prod_{i \in J} X_{i}.$$
Set $X = X_{I}$.
Then for every cardinal number $\vk \ge 2$ we have
$$\tr_{X}(\vk) = \sup_{\# J \le \vk^{2}} \tr_{X_{J}}(\vk),$$
where the supremum is taken over all nonempty subsets $J \subset I$ of cardinality $\# J \le \vk^{2}$.
\end{thm}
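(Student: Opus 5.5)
We prove the two inequalities separately. The inequality $\tr_{X}(\vk) \ge \sup_{\# J \le \vk^{2}} \tr_{X_{J}}(\vk)$ is the easy direction. Fix a nonempty $J \subset I$ and a bounded $r$-separated set $Q_J \subset X_J$ of cardinality $\vk$. Fix a base point $z_i \in X_i$ for each $i \in I \setminus J$. Extend every $q \in Q_J$ to a point $\hat q \in X$ by putting $\hat q_i = z_i$ for $i \notin J$. The resulting map is an isometric embedding $X_J \to X$, since the extra coordinates contribute distance $0$. Hence $\tr_{X}(\vk) \ge \tr_{X_J}(\vk)$ by item~\ref{prop:properties:6} of Proposition~\ref{prop:properties}. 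Alternatively, one can note directly that $\hat Q_J$ is bounded, $r$-separated and of cardinality $\vk$. All spaces here are bounded, so $\tr=\Tr$ by item~\ref{prop:properties:4}, and boundedness never needs to be checked separately.

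For the reverse inequality, take $r < \tr_{X}(\vk)$ and an $r$-separated set $Q \subset X$ with $\#Q = \vk$. Fix $\e > 0$. For every unordered pair $\{q, q'\}$ of distinct points of $Q$ we have $\sup_{i} d_{X_i}(q_i, q'_i) = d_X(q,q') \ge r$. Choose an index $i(q,q') \in I$ with $d_{X_{i(q,q')}}(q_{i}, q'_{i}) > r - \e$. Let $J$ be the set of all chosen indices, enlarged by one arbitrary index if it happens to be empty. Then $\# J \le \#(Q\times Q) = \vk^2$; for finite $\vk$ this is at most $\binom{\vk}{2} \le \vk^2$ (after adding an index if needed). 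Let $\pi_J \: X \to X_J$ be the coordinate projection. It is $1$-Lipschitz, because $d_{X_J} \le d_X$. For distinct $q, q'\in Q$ we get $d_{X_J}(\pi_J q, \pi_J q') \ge d_{X_{i(q,q')}}(q_i,q'_i) > r - \e$. This bound is positive once $\e < r$, so $\pi_J|_Q$ is injective. Therefore $\pi_J(Q)$ is an $(r-\e)$-separated subset of $X_J$ of cardinality $\vk$, and it is bounded since $X_J$ is bounded. This gives $\tr_{X_J}(\vk) \ge r - \e$ with $\# J \le \vk^2$. Letting $\e \to 0$ and $r \to \tr_X(\vk)$ yields $\sup_{\#J\le\vk^2}\tr_{X_J}(\vk) \ge \tr_X(\vk)$.

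The main point needing care is the selection of the witnessing coordinate $i(q,q')$. The supremum $\sup_i d_{X_i}(q_i,q'_i)$ need not be attained, and this is why we lose an $\e$, which is harmless in the limit. The other subtlety is the degenerate case $\tr_X(\vk)=0$, where there is no $r>0$ to work with and the inequality is trivial. The cardinal bound $\#J\le\vk^2$ requires the axiom of choice for the selection, which the paper implicitly uses throughout.
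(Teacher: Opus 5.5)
Your proof is correct and follows the paper's argument. The easy inequality comes from the isometric embedding $X_J \hookrightarrow X$. For the reverse one, you pick for each pair of distinct points of an $r$-separated $Q$ a coordinate where their distance exceeds $r-\e$, let $J$ be the set of these at most $\vk^{2}$ indices, and project $Q$ onto $X_J$ (your remark that $\e<r$ makes this projection injective on $Q$ is a detail the paper uses only implicitly).
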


\begin{proof}
Note that for every nonempty $J \subset I$ the space $X_{J}$ is isometrically embedded in $X_{I}$.
Therefore every $r$-separated set in $X_{J}$ is an $r$-separated set in $X$.
Consequently, $\tr_{X}(\vk) \ge \tr_{X_{J}}(\vk)$, whence
$$\tr_{X}(\vk) \ge \sup_{\# J \le \vk^{2}}\tr_{X_{J}}(\vk).$$
Therefore it is enough for us to prove only the reverse inequality.
Choose $0 < r < \tr_{X}(\vk)$ and consider an $r$-separated set $Q \subset X$ of cardinality $\# Q = \vk$.
Choose $0 < \e < r$.
We introduce a map $f \: Q^{2}_{*} \to I$, where $Q^{2}_{*} = \{(x, y) \in Q^{2} \mid x \ne y\}$, given as follows.
Consider distinct points $x = \{x_{i}\}_{i \in I}, y = \{y_{i}\}_{i \in I} \in Q$. Since $d_{X}(x, y) = \sup_{i \in I} d_{X_{i}}(x_{i},y_{i}) \ge r$, there exists $i \in I$ such that $d_{X_{i}}(x_{i}, y_{i}) \ge r - \e$; we set $f(x, y) = i$.
Consider $J := f(Q)$, the image of $Q$ under the map $f$; then $\# J \le (\# Q)^{2}$.
Also consider the natural projection
$p_{J} \: X \to X_{J}$.
Then by our construction the set $p_{J}(Q) \subset X_{J}$ is $(r - \e)$-separated and $\# p_{J}(Q) = \# Q = \vk$.
Consequently, $\tr_{X_{J}}(\vk) \ge r - \e$. Letting $\e \to 0$ and $r \to \tr_{X}(\vk)$, we obtain the required result.
\end{proof}

\begin{thm}
Let $\{(X_{i}, d_{X_{i}})\}_{i \in I}$ be an arbitrary family of metric spaces such that $\sup_{i \in I} \diam(X_{i})<\infty$.
Set $X = (l^{\infty}) \prod_{i \in I} X_{i}$.
Let $\vk \ge 2$ be a cardinal number. Then
$$\tr_{X}\bigl(\vk^{\# I}\bigr) \ge \inf_{i \in I} \tr_{X_{i}}(\vk).$$
\end{thm}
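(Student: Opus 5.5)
Let $m := \inf_{i \in I} \tr_{X_{i}}(\vk)$. If $m = 0$ there is nothing to prove, so assume $m > 0$. The plan is to take the full Cartesian product of separated sets in the factors, exactly as in the proof of Theorem~\ref{thm:tr_prod_1}, but now using \emph{all} tuples rather than only the ``diagonal'' ones.

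First, fix $0 < r < m$. For every $i \in I$ we have $r < m \le \tr_{X_{i}}(\vk)$, so by the definition of the bounded trace we can choose a bounded $r$-separated set $Q_{i} \subset X_{i}$ with $\# Q_{i} = \vk$. The choice of one such set for each $i$ is a use of the axiom of choice, which is harmless here. Put
$$Q := \prod_{i \in I} Q_{i} \subset X = (l^{\infty})\prod_{i \in I} X_{i}.$$
Its cardinality is $\prod_{i \in I} \# Q_{i} = \vk^{\# I}$ by the definition of cardinal exponentiation.

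Next, check that $Q$ is $r$-separated. If $q = \{q_{i}\}_{i \in I} \ne q' = \{q'_{i}\}_{i \in I}$ are points of $Q$, then there is an index $i_{0}$ with $q_{i_{0}} \ne q'_{i_{0}}$. Since both coordinates lie in $Q_{i_{0}}$, we get
$$d_{X}(q, q') = \sup_{i \in I} d_{X_{i}}(q_{i}, q'_{i}) \ge d_{X_{i_{0}}}(q_{i_{0}}, q'_{i_{0}}) \ge r.$$
Boundedness of $Q$ is automatic, because $\diam(X) = \sup_{i} \diam(X_{i}) < \infty$; hence every subset of $X$ is bounded, and by item~\ref{prop:properties:4} of Proposition~\ref{prop:properties} the traces $\tr$ and $\Tr$ coincide on $X$.

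It follows that $\tr_{X}(\vk^{\# I}) \ge r$, and letting $r \to m$ gives the claim. One point deserves attention: the infimum may fail to be attained, so a uniform $r$ strictly below $m$ must be chosen before selecting the sets $Q_{i}$. Using separation radii $r_{i}$ that depend on $i$ would only yield a bound of $\inf_{i} r_{i}$, which could be smaller than intended. With $r$ fixed in advance this issue does not arise, and beyond that the argument is routine.
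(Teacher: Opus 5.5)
Your proof is correct and follows the paper's argument: you take the full product $\prod_{i \in I} Q_i$ of separated sets of cardinality $\vk$, and two distinct tuples differ in some coordinate, which gives separation in the sup-metric. The only difference is cosmetic: the paper uses factor-dependent radii $r_i < \tr_{X_i}(\vk)$ chosen so that $\inf_i r_i \ge \inf_i \tr_{X_i}(\vk) - \e$, which works just as well, so the caveat in your last paragraph is not a real obstacle.
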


\begin{proof}
Choose $\e > 0$ and consider numbers $r_{i} < \tr_{X_{i}}(\vk)$, $i \in I$, such that
$$r := \inf_{i \in I} r_{i} \ge \inf_{i \in I} \tr_{X_{i}}(\vk) - \e.$$
Let $Q_{i} \subset X_{i}$ be an $r_{i}$-separated set of cardinality $\# Q_{i} = \vk$ from the definition of the trace for all $i \in I$.
Then the set $Q = \prod_{i \in I} Q_{i} \subset X$ has cardinality $\vk^{\# I}$.
We verify that it is $r$-separated.
Indeed, consider two arbitrary distinct points $q, q' \in Q$; let $q = \{q_{i}\}_{i \in I}, \ q' = \{q'_{i}\}_{i \in I}$. Since $q \ne q'$, there exists $i^{*} \in I$ for which $q_{i^{*}} \ne q'_{i^{*}}$. Then
$$d_{X}(q, q') = \sup_{i \in I} d_{X_{i}}(q_{i}, q'_{i}) \ge d_{X_{i^{*}}}(q_{i^{*}}, q'_{i^{*}}) \ge r_{i^{*}} \ge \inf_{i \in I} r_{i} = r.$$
Hence,
$$\tr_{X}\bigl(\vk^{\# I}\bigr) \ge r \ge \inf_{i \in I} \tr_{X_{i}}(\vk) - \e.$$
Letting $\e \to 0$, we obtain the required result.
\end{proof}

\subsection{Calculation of the Gromov--Hausdorff distance for $l^{\infty}$-products using the Ramsey and Erd\H{o}s--Rado theorems}

For every cardinal number $\lambda$ we denote by $\lambda^{+}$ the cardinal number following it.
We begin with combinatorial definitions.
Let $\kappa, \lambda, \mu$ be cardinal numbers.
By the arrow relation
$$\kappa \rightarrow (\lambda )_{\mu }$$
we mean the validity of the following assertion: for any complete graph on $\kappa$ vertices, under an absolutely arbitrary coloring of its edges in (at most) $\mu$ colors, there always exists a complete monochromatic subgraph of cardinality $\lambda$.
In these terms we state the Ramsey and Erd\H{o}s--Rado theorems (see, for example,~\cite{ErdosHajnal1984, Jech2003}).

\begin{thm}[Ramsey]
For every positive integer $m$ we have
$$\aleph_{0} \rightarrow (\aleph_{0})_{m}.$$
\end{thm}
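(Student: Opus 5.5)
We prove the infinite Ramsey theorem $\aleph_{0} \rightarrow (\aleph_{0})_{m}$ by the classical diagonal thinning of the vertex set. Fix a complete graph on a countable vertex set $V_{0}$ and a coloring $c$ of its edges in at most $m$ colors. We construct by induction a sequence of vertices $v_{0}, v_{1}, v_{2}, \dots$, a decreasing chain of infinite sets $V_{0} \supset V_{1} \supset V_{2} \supset \dots$, and colors $c_{0}, c_{1}, \dots \in \{1, \dots, m\}$. The construction keeps the invariants $v_{k} \in V_{k}$, $V_{k+1} \subset V_{k} \setminus \{v_{k}\}$, and $c(\{v_{k}, u\}) = c_{k}$ for every $u \in V_{k+1}$.

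The inductive step runs as follows. Given an infinite set $V_{k}$, pick any $v_{k} \in V_{k}$. The infinite set $V_{k} \setminus \{v_{k}\}$ splits into at most $m$ classes according to the color of the edge joining a vertex to $v_{k}$. By the pigeonhole principle (a finite union of finite sets is finite), at least one class is infinite. Let $V_{k+1}$ be such a class and $c_{k}$ its color. This step is the only place where finiteness of $m$ is used, and it is the main point to check carefully. The same argument fails for infinitely many colors, which is exactly why the Erd\H{o}s--Rado theorem is needed later for larger cardinals.

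Having the sequence, we observe that for $k < l$ we have $v_{l} \in V_{l} \subset V_{k+1}$. Hence $c(\{v_{k}, v_{l}\}) = c_{k}$, so the color of an edge depends only on its smaller index. The map $k \mapsto c_{k}$ sends $\N$ into a finite set, so again by pigeonhole there are a color $c^{*}$ and an infinite set $K \subset \N$ with $c_{k} = c^{*}$ for all $k \in K$. Then $\{v_{k} \mid k \in K\}$ is infinite, and every edge between two of its vertices has color $c^{*}$. So it spans a complete monochromatic subgraph of cardinality $\aleph_{0}$.

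Finally, the $v_{k}$ are pairwise distinct because $v_{l} \in V_{l} \subset V_{k} \setminus \{v_{k}\}$ for $l > k$. The only other point to verify is that the induction requires only countably many choices, which are made along $\N$. No step presents a genuine obstacle; the proof consists of two applications of the pigeonhole principle.
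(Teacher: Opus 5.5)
Your argument is correct. It is the standard diagonal-thinning proof of the infinite Ramsey theorem for pairs, and both pigeonhole steps and the distinctness of the $v_k$ are properly justified. The paper gives no proof of this statement: it quotes it as a classical result, citing Erd\H{o}s--Hajnal and Jech.
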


\begin{thm}[Erd\H{o}s--Rado]
For every cardinal $\lambda \ge \aleph_{0}$ we have
$$(2^{\lambda})^{+} \rightarrow (\lambda^{+})_{\lambda}.$$
\end{thm}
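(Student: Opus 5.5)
The statement is the classical Erd\H{o}s--Rado theorem, and I would prove it by the standard ``end-homogeneous set'' argument. Put $\kappa = (2^{\lambda})^{+}$ and let $c \: [\kappa]^{2} \to \lambda$ be an arbitrary colouring. The goal is to find a set $H \subset \kappa$ with $\# H = \lambda^{+}$ on which $c$ is constant. I would do this in two stages. First build a sequence $\{a_{\alpha}\}_{\alpha < \lambda^{+}}$ of distinct ordinals that is \emph{end-homogeneous}: for $\beta < \alpha$ the colour $c(a_{\beta}, a_{\alpha})$ depends only on $\beta$. Then apply a pigeonhole argument.

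\textbf{Closed set $A$.} For $B \subset \kappa$ and $\xi \in \kappa \setminus B$, call the function $\beta \mapsto c(\beta, \xi)$ on $B$ the \emph{type} of $\xi$ over $B$. I would construct $A \subset \kappa$ with $\# A \le 2^{\lambda}$ and the following closure property. For every $B \subset A$ with $\# B \le \lambda$ and every $\xi \in \kappa \setminus B$, there is $\eta \in A \setminus B$ having the same type over $B$ as $\xi$.

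Build $A$ as the union of an increasing chain $\{A_{\gamma}\}_{\gamma < \lambda^{+}}$ with $\# A_{\gamma} \le 2^{\lambda}$, taking unions at limit stages. To pass from $A_{\gamma}$ to $A_{\gamma+1}$, for each $B \subset A_{\gamma}$ with $\# B \le \lambda$ and each type $t \colon B \to \lambda$ realised by some $\xi \notin B$, add one such realiser.

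The cardinal arithmetic keeps the size at most $2^{\lambda}$ at each step. There are at most $(2^{\lambda})^{\lambda} = 2^{\lambda}$ sets $B$, and at most $\lambda^{\lambda} = 2^{\lambda}$ types over each of them. Since $\lambda^{+}$ is regular and exceeds $\lambda$, every $B \subset A$ with $\# B \le \lambda$ lies inside some $A_{\gamma}$, so closure holds for $A$. Finally, $\# A \le \lambda^{+} \cdot 2^{\lambda} = 2^{\lambda} < \kappa$, so there is some $\xi \in \kappa \setminus A$.

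\textbf{End-homogeneous sequence.} Fix such a $\xi \notin A$ and choose $a_{\alpha} \in A$ by recursion on $\alpha < \lambda^{+}$. Let $B_{\alpha} = \{a_{\beta} \mid \beta < \alpha\}$; it has cardinality at most $\lambda$, and $\xi \notin B_{\alpha}$. By closure there is $a_{\alpha} \in A \setminus B_{\alpha}$ with
$$c(a_{\beta}, a_{\alpha}) = c(a_{\beta}, \xi) \quad \text{for all } \beta < \alpha.$$
The $a_{\alpha}$ are then pairwise distinct. Set $g(\beta) := c(a_{\beta}, \xi) \in \lambda$. Then for every $\beta < \alpha < \lambda^{+}$ we have $c(a_{\beta}, a_{\alpha}) = g(\beta)$.

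\textbf{Pigeonhole.} The map $g \colon \lambda^{+} \to \lambda$ has a fibre $g^{-1}(\nu)$ of cardinality $\lambda^{+}$, by regularity of $\lambda^{+}$, or simply because $\lambda \cdot \lambda = \lambda < \lambda^{+}$. Take $H = \{a_{\beta} \mid g(\beta) = \nu\}$. For any two indices $\beta < \alpha$ in that fibre, $c(a_{\beta}, a_{\alpha}) = g(\beta) = \nu$. Hence $H$ is monochromatic of cardinality $\lambda^{+}$. The same argument works verbatim for colourings with fewer than $\lambda$ colours.

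I expect the main obstacle to be the construction of the closed set $A$. Everything hinges on the bound $(2^{\lambda})^{\lambda} \cdot \lambda^{\lambda} = 2^{\lambda}$, and on making the chain long enough, of length $\lambda^{+}$ rather than $\omega$, that every small subset $B$ is captured at a single stage. The recursion producing $a_{\alpha}$ also needs the realiser to avoid $B_{\alpha}$, which is why the closure property is formulated with $\eta \in A \setminus B$. Once $A$ is in hand, the remaining steps are routine.
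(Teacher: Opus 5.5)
Your proof is correct. It is the standard argument: build a set $A$ of size $2^{\lambda}$ closed under realizing colour-types over subsets of size at most $\lambda$, extract an end-homogeneous sequence of length $\lambda^{+}$ by comparing with a point outside $A$, then apply pigeonhole on $g\colon\lambda^{+}\to\lambda$. The paper gives no proof of this theorem and only quotes it from the Erd\H{o}s--Hajnal survey and Jech's monograph, and your argument is essentially the proof found in Jech.
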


\begin{thm}\label{thm:trace_of_prod}
\begin{enumerate}
\item
Let $\{(X_i, d_{X_i})\}_{i \in I}$ be a family of metric spaces for which $\sup_{i \in I} \diam(X_{i}) < \infty$, with $\#I = \lambda \ge \aleph_{0}$. Set
$$X = (l^\infty)\prod_{i \in I} X_{i}.$$
Then the inequality holds\/\rom:
$$\tr_{X} \bigl( (2^\lambda)^{+} \bigr) \le \sup_{i \in I} \tr_{X_i} \bigl( \lambda^{+} \bigr). $$
\item
Let $N$ be a finite cardinal number, let $X_{1}, \dots, X_{N}$ be metric spaces. Set
$$X = (l^\infty)\prod_{n = 1}^{N} X_{n}.$$
Then the inequality holds\/\rom:
$$\tr_{X}(\aleph_{0}) = \max\bigl\{\tr_{X_{1}}(\aleph_{0}), \dots, \tr_{X_{N}}(\aleph_{0})\bigr\}.$$
\end{enumerate}
\end{thm}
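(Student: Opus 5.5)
The plan is to prove both items the same way: color pairs of points of a large separated set by a coordinate in which they are far apart, then apply a partition theorem to get a large set that is separated in one fixed coordinate.

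For item 1, fix $0<r<\tr_X\bigl((2^\lambda)^+\bigr)$ (if the trace is $0$ there is nothing to prove) and $0<\e<r$. Choose a bounded $r$-separated set $Q\subset X$ with $\#Q=(2^\lambda)^+$. Exactly as in the proof of the preceding theorem on $\tr_X(\vk)=\sup_{\#J\le\vk^2}\tr_{X_J}(\vk)$, each unordered pair $\{x,y\}\subset Q$ with $x\ne y$ satisfies $\sup_i d_{X_i}(x_i,y_i)\ge r$. So we can pick an index $c(\{x,y\})\in I$ with $d_{X_i}(x_i,y_i)\ge r-\e$ for $i=c(\{x,y\})$. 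This is an edge coloring of the complete graph on $Q$ with at most $\#I=\lambda$ colors.

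By the Erd\H{o}s--Rado theorem $(2^\lambda)^+\rightarrow(\lambda^+)_\lambda$, there are a subset $Q'\subset Q$ with $\#Q'=\lambda^+$ and a single index $i_0\in I$ such that every pair of distinct points of $Q'$ has color $i_0$. Consider the projection $p_{i_0}\colon X\to X_{i_0}$. For distinct $x,y\in Q'$ we get $d_{X_{i_0}}(x_{i_0},y_{i_0})\ge r-\e>0$. Hence $p_{i_0}|_{Q'}$ is injective, and $p_{i_0}(Q')$ is an $(r-\e)$-separated set of cardinality $\lambda^+$. It is bounded, since $p_{i_0}$ is $1$-Lipschitz and $Q$ is bounded. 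Therefore $\tr_{X_{i_0}}(\lambda^+)\ge r-\e$, so $\sup_i\tr_{X_i}(\lambda^+)\ge r-\e$. Letting $\e\to0$ and $r\to\tr_X\bigl((2^\lambda)^+\bigr)$ finishes item 1.

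For item 2, the inequality $\ge$ is item 2 of Theorem~\ref{thm:tr_prod_1}. For $\le$, repeat the argument above with a bounded $r$-separated countable $Q$ and a coloring of pairs by $n\in\{1,\dots,N\}$. Ramsey's theorem $\aleph_0\rightarrow(\aleph_0)_N$ gives an infinite monochromatic $Q'\subset Q$ of color $n_0$. Its projection to $X_{n_0}$ is a bounded $(r-\e)$-separated set of cardinality $\aleph_0$, so $\tr_{X_{n_0}}(\aleph_0)\ge r-\e$, and we conclude as before.

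The only delicate point is the choice of coloring. The color must be an index where the pair is nearly $r$-apart; the $\e$-slack is needed because the supremum over coordinates may not be attained. Injectivity of the projection must come from positivity of $r-\e$. Beyond that, the Erd\H{o}s--Rado and Ramsey theorems do all the combinatorial work. The unbounded-supremum convention causes no trouble, since the products have bounded factors by hypothesis.
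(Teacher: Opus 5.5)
Your proof is correct and follows the paper's argument. You color each pair of a large separated set in $X$ by a coordinate where the pair is nearly as far apart, apply Erd\H{o}s--Rado (Ramsey for item 2), and project the monochromatic clique to that coordinate. The paper argues by contradiction from an $(r+\varepsilon)$-separated set with threshold $r+\varepsilon/2$, whereas you argue directly with slack $r-\varepsilon$ and also spell out injectivity, boundedness of the projected set, and the Ramsey case that the paper leaves implicit.
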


\begin{proof}
Set $r = \sup_{i \in I} \tr_{X_i}(\lambda^{+})$.
We argue by contradiction. So, suppose that $\tr_{X}((2^\lambda)^{+}) > r$.
Then there exists $\e > 0$ such that in $X$ there is an $(r + \e)$-separated subset $Q$ of cardinality $\#Q = (2^\lambda)^{+}$.
We define a coloring of the complete graph on the vertices of the set $Q$ in the set of colors $I$, given as follows.
By the definition of the metric on the space $X$, for any two distinct points $x, y \in Q$ we have
$$d_{X}(x, y) = \sup_{i \in I} d_{X_i}(x_{i}, y_{i}) \ge r + \e.$$
Consequently, there exists an index $i^{*} \in I$ such that
$d_{X_{i^{*}}}(x_{i^{*}}, y_{i^{*}}) \ge r + \e/2$.
We color the edge $\{x, y\}$ in the color $i^{*}$.
By the Erd\H{o}s--Rado theorem, since $\#Q = (2^\lambda)^{+}$ and the number of colors is $\lambda$, there exists in $Q$ a complete subgraph $Q' \subset Q$ of cardinality $\# Q' = \lambda^{+}$, all of whose edges are colored in some color $j \in I$.
This means that the image $p_{j}(Q')$ of the set $Q'$ under the canonical projection $p_{j}\: X \to X_{j}$ onto $X_j$ has cardinality $\# p_{j}(Q') = \lambda^{+}$, and the set $p_{j}(Q')$ is ($r + \e/2$)-separated in $X_{j}$.
Hence, $\tr_{X_{j}}(\lambda^{+}) \ge r + \e/2$, which contradicts the definition of $r$.
\end{proof}

\begin{cor} \label{cor:tr_prod}
\begin{enumerate}
\item
Let $X_0$ be a metric space, let a set $I$ of cardinality $\#I = \lambda \ge \aleph_{0}$ be given. Set
$$X = (l^\infty)\prod_{i \in I} X_{0}.$$
Then the inequality holds\/\rom:
$$\tr_{X_{0}}\bigl( (2^\lambda)^{+} \bigr) \le \tr_{X} \bigl( (2^\lambda)^{+} \bigr) \le \tr_{X_{0}} \bigl( \lambda^{+} \bigr). $$
\item
Let $N$ be a finite cardinal number, let $X_{0}$ be a metric space. Set
$$X = (l^\infty)\prod_{n = 1}^{N} X_{0}.$$
Then the equality holds\/\rom:
$$\tr_{X}(\aleph_{0}) = \tr_{X_{0}}(\aleph_{0}).$$
\end{enumerate}
\end{cor}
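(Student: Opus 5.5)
The plan is to derive both items of Corollary~\ref{cor:tr_prod} from Theorem~\ref{thm:trace_of_prod}, specialised to a constant family $X_i = X_0$. The only additional ingredient is the lower bound from Theorem~\ref{thm:tr_prod_1}(2).

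For item~1, the upper bound $\tr_{X}\bigl((2^\lambda)^{+}\bigr) \le \tr_{X_0}(\lambda^{+})$ follows from item~1 of Theorem~\ref{thm:trace_of_prod}. With all factors equal to $X_0$, the supremum $\sup_{i\in I}\tr_{X_i}(\lambda^+)$ collapses to $\tr_{X_0}(\lambda^+)$. We may assume $\diam(X_0)<\infty$, since otherwise the $l^\infty$-product is not defined; then $\sup_i \diam(X_i) < \infty$ and the theorem applies. The lower bound $\tr_{X_0}\bigl((2^\lambda)^{+}\bigr) \le \tr_X\bigl((2^\lambda)^{+}\bigr)$ comes from Theorem~\ref{thm:tr_prod_1}(2) with $\vk = (2^\lambda)^+$, which gives $\tr_X(\vk) \ge \sup_i \tr_{X_i}(\vk) = \tr_{X_0}(\vk)$. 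Alternatively, one can embed $X_0$ isometrically into $X$ as the diagonal $x\mapsto (x)_{i\in I}$ and apply item~\ref{prop:properties:6} of Proposition~\ref{prop:properties}.

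For item~2, we specialise the equality in item~2 of Theorem~\ref{thm:trace_of_prod} to $X_1=\dots=X_N=X_0$, so that the maximum equals $\tr_{X_0}(\aleph_0)$. The proof of Theorem~\ref{thm:trace_of_prod} shown in the paper only treats item~1. I would therefore also sketch item~2, since the corollary rests on it. The inequality $\ge$ is again Theorem~\ref{thm:tr_prod_1}(2).

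For $\le$, take $r=\max_n \tr_{X_n}(\aleph_0)$ and suppose some countable set $Q$ is $(r+\e)$-separated and bounded. Colour each edge $\{x,y\}$ by an index $n$ with $d_{X_n}(x_n,y_n)\ge r+\e/2$; such an index exists because the $\sup$ over finitely many coordinates is attained up to $\e/2$. Ramsey's theorem $\aleph_0\to(\aleph_0)_N$ then yields an infinite monochromatic $Q'$ of colour $j$. Its projection $p_j(Q')$ is countable and $(r+\e/2)$-separated, and it is bounded because projections are $1$-Lipschitz. This contradicts the definition of $r$.

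The main point requiring care, rather than a genuine obstacle, is the boundedness requirement in the definition of $\tr$ in these separated-set arguments. The projections of bounded sets are bounded, and the diagonal image of a bounded set is bounded, so this is harmless. I would also check that injectivity of $p_j$ on $Q'$ follows from separation.
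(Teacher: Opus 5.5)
Your proposal is correct and follows the paper's proof. As in the paper, the lower bounds come from Theorem~\ref{thm:tr_prod_1}(2) (your diagonal-embedding alternative is equivalent), and the upper bounds come from Theorem~\ref{thm:trace_of_prod} applied to a constant family. Your Ramsey-theorem sketch of item~2 of Theorem~\ref{thm:trace_of_prod}, which the paper states but does not prove separately, is also sound.
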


\begin{proof}
\begin{enumerate}
\item
The first inequality is ensured by Theorem~\ref{thm:tr_prod_1}, and the second one by Theorem~\ref{thm:trace_of_prod}.
\item
From Theorems~\ref{thm:tr_prod_1} and~\ref{thm:trace_of_prod} we obtain that
$\tr_{X_{0}}(\aleph_{0}) \le \tr_{X}(\aleph_{0}) \le \tr_{X_{0}}(\aleph_{0})$, therefore the equality $\tr_{X}(\aleph_{0}) = \tr_{X_{0}}(\aleph_{0})$ holds.
\end{enumerate}
\end{proof}

The following theorem is known; it estimates the Gromov--Hausdorff distance between products of metric spaces.
The question arises: what are sufficient conditions for this bound to be exact?

\begin{thm}[\cite{Me_GH_Product_2026}] \label{thm:prod_2026}
Let $\{X_{i} \}_{i \in I}$, $\{Y_{i} \}_{i \in I}$ be arbitrary families of metric spaces such that $\sup_{i \in I} \diam(X_{i})<\infty$ and $\sup_{i \in I} \diam(Y_{i})<\infty$.
Then
$$d_{GH} \Biggl((l^{\infty}) \prod_{i \in I} X_{i}, \ (l^{\infty})\prod_{i \in I}Y_{i} \Biggr) \le \sup_{i \in I}d_{GH}(X_{i}, {Y_{i}}).$$
\end{thm}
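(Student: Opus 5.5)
The plan is to prove the bound directly through Proposition~\ref{prop:def}: build maps on the products coordinatewise from almost optimal maps on the factors, and check that distortion and co-distortion do not grow under the $l^\infty$-metric. Set $s = \sup_{i \in I} d_{GH}(X_i, Y_i)$ and assume $s < \infty$; this is automatic, since all diameters are uniformly bounded and $2d_{GH}(X_i,Y_i) \le \max\{\diam X_i, \diam Y_i\}$. Fix $\e > 0$. For each $i \in I$, Proposition~\ref{prop:def} gives maps $f_i \: X_i \to Y_i$ and $g_i \: Y_i \to X_i$ with
$$\max\bigl\{\dis(f_i), \dis(g_i), \codis(f_i, g_i)\bigr\} \le 2d_{GH}(X_i, Y_i) + \e \le 2s + \e.$$
Selecting such a pair for every $i$ simultaneously uses the axiom of choice, which is harmless here.

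Next define $F \: X \to Y$ by $F(\{x_i\}_{i\in I}) = \{f_i(x_i)\}_{i \in I}$, and define $G \: Y \to X$ in the same way from the $g_i$. These are well-defined maps into the $l^\infty$-products, because each factor has bounded diameter, so every element of $\prod Y_i$ is a point of $Y$. The key elementary fact is that for bounded families of reals $\{a_i\}$ and $\{b_i\}$,
$$\Bigl|\sup_i a_i - \sup_i b_i\Bigr| \le \sup_i |a_i - b_i|.$$
Applying it with $a_i = d_{X_i}(x_i, x'_i)$ and $b_i = d_{Y_i}(f_i(x_i), f_i(x'_i))$ gives $\dis(F) \le \sup_i \dis(f_i) \le 2s+\e$. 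The same argument gives $\dis(G) \le 2s + \e$. Finally, taking $a_i = d_{X_i}(x_i, g_i(y_i))$ and $b_i = d_{Y_i}(y_i, f_i(x_i))$ gives $\codis(F,G) \le \sup_i \codis(f_i,g_i) \le 2s+\e$.

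By Proposition~\ref{prop:def}, $2d_{GH}(X, Y) \le 2s + \e$, and letting $\e \to 0$ yields the claim. There is no real obstacle. The only points needing care are the sup inequality, which follows from $a_i \le b_i + |a_i-b_i| \le \sup_j b_j + \sup_j|a_j-b_j|$ together with the symmetric bound, and the finiteness of all the suprema involved, which is guaranteed by the uniform diameter bounds.
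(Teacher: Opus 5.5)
Your proof is correct: the coordinatewise maps built from $\e$-optimal pairs $(f_i,g_i)$, together with the inequality $\bigl|\sup_i a_i-\sup_i b_i\bigr|\le\sup_i|a_i-b_i|$, bound $\dis(F)$, $\dis(G)$ and $\codis(F,G)$ by $2s+\e$, and Proposition~\ref{prop:def} then finishes the argument. The paper does not prove this statement itself (it quotes it from the author's earlier work), and your coordinatewise argument is the natural proof; the uniform diameter bounds are needed only to make the $l^\infty$-products and all the suprema involved finite.
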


It turns out that in the case where all components of the product coincide, this sufficient condition can be obtained with the help of the theorems just proved.
Thus, the following theorem holds.

\begin{thm} \label{thm:Calculation_Of_GH_between_products_by_Tr}
\begin{enumerate}
\item
Let $\lambda \ge \aleph_{0}$ be a cardinal number.
Let the metric spaces $X_0, Y_0$ be such that
$$\tr_{X_0}(\lambda^{+}) = \tr_{X_0}\bigl((2^{\lambda})^{+}\bigr) \quad \text{ and } \quad \tr_{Y_0}(\lambda^{+}) = \tr_{Y_0}\bigl((2^{\lambda})^{+}\bigr),$$
with
$$2d_{GH}(X_0, Y_0) = \bigl|\tr_{X_0}(\lambda^{+}) - \tr_{Y_0}(\lambda^{+})\bigr|.$$
Let a set $I$ of cardinality $\#I = \lambda$.
Then the equality holds\/\rom:
$$d_{GH} \Biggl((l^{\infty}) \prod_{i \in I} X_{0}, \ (l^{\infty})\prod_{i \in I}Y_{0} \Biggr) = d_{GH}(X_{0}, Y_{0}).$$
\item
Let $N$ be a finite cardinal number.
Let the metric spaces $X_0, Y_0$ be such that
$$2d_{GH}(X_0, Y_0) = \bigl|\tr_{X_0}(\aleph_{0}) - \tr_{Y_0}(\aleph_{0})\bigr|.$$
Then the equality holds\/\rom:
$$d_{GH} \Biggl((l^{\infty}) \prod_{i = 1}^{N} X_{0}, \ (l^{\infty})\prod_{i = 1}^{N} Y_{0} \Biggr) = d_{GH}(X_{0}, Y_{0}).$$
\end{enumerate}
\end{thm}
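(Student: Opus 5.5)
The equality will follow by squeezing: the upper bound comes from Theorem~\ref{thm:prod_2026}, and the lower bound from Theorem~\ref{thm:main} applied at a suitable cardinal whose traces for the products we can compute via Corollary~\ref{cor:tr_prod}. Set $X = (l^{\infty})\prod_{i \in I} X_0$ and $Y = (l^{\infty})\prod_{i \in I} Y_0$. Taking $X_i = X_0$ and $Y_i = Y_0$ in Theorem~\ref{thm:prod_2026} gives at once $d_{GH}(X, Y) \le d_{GH}(X_0, Y_0)$. We may assume $\sup\diam$ is finite, since the products are defined only under this hypothesis; in particular all traces involved are finite, so the extended-sense differences cause no trouble.

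For the lower bound in item~1, I use the cardinal $\mu = (2^{\lambda})^{+}$. By Corollary~\ref{cor:tr_prod}(1),
$$\tr_{X_0}(\mu) \le \tr_X(\mu) \le \tr_{X_0}(\lambda^{+}),$$
and the hypothesis $\tr_{X_0}(\lambda^{+}) = \tr_{X_0}(\mu)$ collapses this chain to $\tr_X(\mu) = \tr_{X_0}(\lambda^{+})$. The same argument gives $\tr_Y(\mu) = \tr_{Y_0}(\lambda^{+})$. Theorem~\ref{thm:main} at $\vk = \mu$ then yields
$$2d_{GH}(X, Y) \ge \bigl|\tr_X(\mu) - \tr_Y(\mu)\bigr| = \bigl|\tr_{X_0}(\lambda^{+}) - \tr_{Y_0}(\lambda^{+})\bigr| = 2d_{GH}(X_0, Y_0),$$
using the second hypothesis. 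Combined with the upper bound, this proves item~1.

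Item~2 is analogous but simpler. Corollary~\ref{cor:tr_prod}(2) gives $\tr_X(\aleph_0) = \tr_{X_0}(\aleph_0)$ and likewise $\tr_Y(\aleph_0) = \tr_{Y_0}(\aleph_0)$, with no stability hypothesis needed, because Ramsey's theorem makes the bound exact at the same cardinal. Theorem~\ref{thm:main} at $\vk = \aleph_0$, together with the hypothesis, gives $2d_{GH}(X,Y) \ge 2d_{GH}(X_0,Y_0)$. For the upper bound, Theorem~\ref{thm:prod_2026} applies to the finite index set $\{1,\dots,N\}$.

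The only delicate point is checking that each earlier result applies in the form needed. Corollary~\ref{cor:tr_prod}(1) requires $\lambda \ge \aleph_0$, which is assumed. Its finite-product counterpart implicitly assumes finite diameters, or at least that the $l^\infty$-product is a metric space; for finitely many factors this holds even without boundedness, and I will note that Theorem~\ref{thm:prod_2026} is invoked under its stated boundedness assumption. Everything else is a direct substitution.
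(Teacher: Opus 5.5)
Your proposal is correct and matches the paper's proof: the upper bound comes from Theorem~\ref{thm:prod_2026}, and the lower bound collapses the chain of Corollary~\ref{cor:tr_prod} using the stability hypothesis and then applies Theorem~\ref{thm:main} at $\vk=(2^{\lambda})^{+}$ (respectively at $\vk=\aleph_{0}$ for item~2). Your remark on boundedness is also consistent with the paper. Its definition of the $l^\infty$-product already requires $\sup\diam<\infty$, so $X_0$ and $Y_0$ are bounded in both items and all the traces involved are finite.
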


\begin{proof}
We prove only the first item, since the second item is obtained in the same way.
Set
$$X = (l^{\infty}) \prod_{i \in I} X_{0} \quad \text{ and } \quad Y = (l^{\infty}) \prod_{i \in I} Y_{0}.$$
By Theorem~\ref{thm:prod_2026} it suffices to show that $d_{GH}(X, Y) \ge d_{GH}(X_0, Y_0)$.
Thus, by Corollary~\ref{cor:tr_prod}, taking into account that $\tr_{X_0}(\lambda^{+}) = \tr_{X_0}\bigl((2^{\lambda})^{+}\bigr)$ and $\tr_{Y_0}(\lambda^{+}) = \tr_{Y_0}\bigl((2^{\lambda})^{+}\bigr)$, we obtain that
$$\tr_{X}\bigl((2^{\lambda})^{+}\bigr) = \tr_{X_0}(\lambda^{+}) \quad \text{ and } \quad \tr_{Y}\bigl((2^{\lambda})^{+}\bigr) = \tr_{Y_{0}}(\lambda^{+}).$$
Then by Theorem~\ref{thm:main} we obtain that
$$2d_{GH}(X, Y) \ge \Bigl| \tr_{X}\bigl((2^{\lambda})^{+}\bigr) - \tr_{Y}\bigl((2^{\lambda})^{+}\bigr)\Bigr| = \bigl| \tr_{X_0}(\lambda^{+}) - \tr_{Y_0}(\lambda^{+})\bigr| = 2d_{GH}(X_0, Y_0),$$
which completes the proof.
\end{proof}

\begin{examp}
We show how Theorem~\ref{thm:Calculation_Of_GH_between_products_by_Tr} can be used to compute the Gromov--Hausdorff distance.
Let $X_0$ be an arbitrary nontrivial metric compactum (for example, the segment $[0, 1]$).
Set $D = \diam(X_0) > 0$.
We construct the space $Y_0$ by adding to $X_0$ a countable ``cloud'' of discrete points.
Namely, we set $Y_0 = X_0 \sqcup Z$, where $Z = \{z_1, z_2, \dots\}$ is a countable set.
We define the metric on $Y_0$ as follows:
\begin{itemize}
\item
on the subspace $X_0$ the metric coincides with the original one;
\item
for any $i \ne j$ we set $d_{Y_0}(z_i, z_j) = D$;
\item
for any $x \in X_0$ and $z_i \in Z$ we set $d_{Y_0}(x, z_i) = D/2$.
\end{itemize}
Note that $Y_0$ is a metric space of diameter $D$.

Since the space $X_0$ is compact (and hence totally bounded), by Corollary~\ref{cor:tr_iff} we have $\tr_{X_0}(\aleph_{0}) = 0$.
On the other hand, the set $Z \subset Y_0$ is countable and $D$-separated, therefore $\tr_{Y_0}(\aleph_{0}) = D$.
Let us estimate the Gromov--Hausdorff distance $d_{GH}(X_0, Y_0)$.
From Theorem~\ref{thm:main} we immediately obtain the lower bound:
$$2d_{GH}(X_0, Y_0) \ge |\tr_{X_0}(\aleph_{0}) - \tr_{Y_0}(\aleph_{0})| = D.$$
To obtain the upper bound, we consider the following maps:
$f \: X_{0} \to Y_{0}$ --- the natural isometric embedding and
$$g \: Y_{0} \to X_{0}, \quad g(y) =
\begin{cases}
y, & \text{ if } y \in X_{0}, \\
x_0, & \text{ if } y \in Z,
\end{cases}
$$
where $x_0 \in X_0$ is an arbitrarily chosen point.
Then $\dis(f) = 0, \ \dis(g) = D, \ \codis(f, g) = D/2$.
Consequently,
$$2d_{GH}(X_0, Y_0) \le \max\bigl\{ \dis(f), \ \dis(g), \ \codis(f, g) \bigr\} = D.$$
Thus, $2d_{GH}(X_0, Y_0) = D$, which coincides exactly with the difference of the $\aleph_{0}$-traces.

Now let $N \ge 2$ be an arbitrary positive integer. Consider the $N$-dimensional Cartesian powers $X_0^N$ and $Y_0^N$ with the metric of $l^{\infty}$-products.
The space $X_0^N$ is compact (as a finite product of compact spaces), while $Y_0^N$ has a complicated combinatorial structure.
By the second item of~\ref{thm:Calculation_Of_GH_between_products_by_Tr}, we obtain
$$d_{GH}(X_0^N, Y_0^N) = d_{GH}(X_0, Y_0) = \frac{D}{2}.$$
\end{examp}

\subsection{Lower bound for the Gromov--Hausdorff distance by means of the Erd\H{o}s--Rado theorem}

\begin{thm}
Let $\lambda$ be an infinite cardinal number. Let $\{X_\alpha\}_{\alpha \in A}$ and $\{Y_\beta\}_{\beta \in B}$ be families of nonempty metric spaces satisfying the conditions\/\rom:
$$
\sup_{\alpha \in A} \diam(X_\alpha) < \infty \quad \text{and} \quad \sup_{\beta \in B} \diam(Y_\beta) < \infty,
$$
and the cardinalities of the index sets are such that $\#A \le \lambda, \, \#B \le \lambda$. Suppose that the cardinality of at least one of the $l^\infty$-products $X = (l^\infty) \prod_{\alpha \in A} X_\alpha$ or $Y = (l^\infty) \prod_{\beta \in B} Y_\beta$ is sufficiently large, namely\/\rom:
$$
\max\{\#X, \#Y\} \ge (2^\lambda)^+.
$$
Then the following lower bound for the Gromov--Hausdorff distance holds\/\rom:
$$
d_{GH}(X, Y) \ge \inf_{\alpha, \beta} \inf \Bigl\{ d_{GH}(K, L) \Bigm| K \subset X_\alpha, \, L \subset Y_\beta, \, \max\{\#K, \#L\} = \lambda^+ \Bigr\}.
$$
\end{thm}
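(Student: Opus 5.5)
Without loss of generality $\#X \ge (2^\lambda)^+$ (the other case is symmetric) and $d_{GH}(X,Y)<\infty$ (automatic, since both products are bounded). Denote the right-hand side by $\delta$ and fix an arbitrary $r > 2d_{GH}(X,Y)$. By Proposition~\ref{prop:def} there are maps $f\colon X\to Y$ and $g\colon Y\to X$ with $\max\{\dis(f),\dis(g),\codis(f,g)\}<r$. The goal is to produce $\alpha\in A$, $\beta\in B$ and sets $K\subset X_\alpha$, $L\subset Y_\beta$ with $\max\{\#K,\#L\}=\lambda^+$ and $2d_{GH}(K,L)\le r$ (up to an arbitrarily small error). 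Then $\delta\le r/2$, and letting $r\to 2d_{GH}(X,Y)$ gives the claim.

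The plan is to use the Erd\H{o}s--Rado theorem as in the proof of Theorem~\ref{thm:trace_of_prod}, now with a product colouring. Pick $Q\subset X$ with $\#Q=(2^\lambda)^+$. Colour each edge $\{x,x'\}$ of the complete graph on $Q$ by a pair $(\alpha,\beta)\in A\times B$, where $\alpha$ nearly realizes the supremum $d_X(x,x')=\sup_\alpha d_{X_\alpha}(x_\alpha,x'_\alpha)$, and $\beta$ nearly realizes $d_Y(f(x),f(x'))$. Since $\#(A\times B)\le\lambda$, the Erd\H{o}s--Rado theorem gives a monochromatic $Q'\subset Q$ of cardinality $\lambda^+$ with a single colour $(\alpha,\beta)$.

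The main obstacle is that a monochromatic colour alone does not control distances: projecting $Q'$ to $X_\alpha$ may fail to be injective, and the approximating index must be uniform. To handle the approximation error I would refine the palette. I would colour by $(\alpha,\beta,k)$, where $k\in\mathbb{N}$ records the distance level up to $\varepsilon$; this is still at most $\lambda$ colours. I would also demand that $\alpha$ realize the supremum within $\varepsilon$ for both orders of the pair. On $Q'$ the projection $p_\alpha$ is then injective whenever distances exceed $\varepsilon$ (arrange $Q$ to be $\varepsilon$-separated, or discard the case $\lambda^+$-many points coincide). Set $K=p_\alpha(Q')$, of cardinality $\lambda^+$, and $L=p_\beta(f(Q'))$. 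The natural correspondence $p_\alpha(x)\leftrightarrow p_\beta(f(x))$ between $K$ and $L$ has distortion at most $\dis(f)+2\varepsilon<r+2\varepsilon$. Hence $2d_{GH}(K,L)\le r+2\varepsilon$, and $\max\{\#K,\#L\}=\lambda^+$ because $\#L\le\#Q'=\lambda^+$.

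The step I expect to be delicate is producing a $\lambda^+$-size $\varepsilon$-separated $Q$ in $X$, since $X$ merely has large cardinality. If $X$ contains no $\varepsilon$-separated subset of size $(2^\lambda)^+$ for any $\varepsilon$, I would instead pass to the identity colouring. That is, I would colour edges only by the indices attaining $d_X$ and $d_Y\circ f$ up to a rational error, accept that $p_\alpha$ might collapse points, and choose $K$ as $p_\alpha(Q')$ only after checking that collapses are impossible: $x\ne x'$ in $X$ forces $d_X(x,x')>0$, which is nearly attained at the monochromatic $\alpha$. Refining colours by the rational level $k$ ensures this is nonzero uniformly.
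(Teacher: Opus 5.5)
Your overall scheme is the paper's: colour pairs by $(\alpha,\beta)$ using coordinates that nearly realize the sup-distances, apply Erd\H{o}s--Rado with at most $\lambda$ colours, project the monochromatic clique to $X_\alpha$ and $Y_\beta$, and bound the distortion by $\dis+2\e$. Colouring points of $Q\subset X$ through the graph of $f$, rather than all of $R$, is actually cleaner than the paper's version. The paper demands nonzero distances on both sides, which is impossible for distinct pairs of $R$ that share a coordinate, and it then claims $\#K=\#L=\lambda^+$. You only need $\#K=\lambda^+$ and $\#L\le\lambda^+$, which is exactly the $\max$ condition in the statement. On the $Y$ side no positivity is needed: if $f(x)=f(x')$, every $\beta$ trivially satisfies the approximation. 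The extra colour coordinate $k$ and the ``both orders'' requirement are unnecessary, since monochromaticity already makes the approximating index uniform.

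The gap is the injectivity of $p_\alpha$ on $Q'$, which is what gives $\#K=\lambda^+$; neither of your fixes is complete.
\begin{itemize}
\item Route (a) needs a single $\e_0$-separated $Q$ of size $(2^\lambda)^+$ (not $\lambda^+$ as you wrote), and you leave its existence open. It does exist: $\#X\le\dens(X)^{\aleph_0}$ forces $\dens(X)\ge(2^\lambda)^+$. This cardinal is regular and uncountable, so some maximal $1/n$-separated set already has that size. Hence your fallback case never occurs, but this has to be proved.
\item Route (b) fails as written. If $k$ records $d_X(x,x')$ in $\e$-bands, then on a clique of level $k=0$ the condition $d_{X_\alpha}>d_X-\e$ is vacuous. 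So $d_{X_\alpha}(x_\alpha,x'_\alpha)$ may vanish and $p_\alpha$ may collapse points.
\end{itemize}
The missing one-line idea, which is what the paper uses, is to build positivity into the choice of $\alpha$ for each edge. Since $d_X(x,x')>0$ is a supremum, you can choose $\alpha$ with $d_{X_\alpha}(x_\alpha,x'_\alpha)>\max\{d_X(x,x')-\e,\ \tfrac12 d_X(x,x')\}$. Then every pair in a monochromatic $Q'$ has nonzero $\alpha$-distance, so $p_\alpha|_{Q'}$ is injective, with no separation hypothesis and no extra colours.
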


\begin{proof}
Let $\e > 0$. Without loss of generality we assume that $d_{GH}(X, Y) < \infty$, since otherwise it makes no sense to bound infinity from below, and there is nothing to prove.

By the definition of the Gromov--Hausdorff distance as the infimum of distortions, we can choose a correspondence $R \in \cR(X,Y)$ such that
$$
\dis(R) < 2d_{GH}(X, Y) + \e.
$$
Since the correspondence $R \subset X \times Y$ projects surjectively onto both factors by definition, its cardinality is certainly not less than the cardinality of each of the spaces $X$ and $Y$. Consequently, the condition on the cardinality of $X$ or $Y$ automatically implies that the set of pairs $R$ itself has cardinality
$$
\#R \ge \max\{\#X, \#Y\} \ge (2^\lambda)^{+}.
$$

Now we apply the combinatorial machinery of Ramsey theory. We regard the set of pairs $R$ as the vertices of a complete graph. Our goal is to color its edges in a number of colors not exceeding $\lambda$ and then to use the Erd\H{o}s--Rado theorem.

We define the coloring rule. Let $(x, y)$ and $(\tilde{x}, \tilde{y})$ be two arbitrary distinct vertices of our graph (that is, two different pairs of the correspondence $R$). Each point $x, \tilde{x} \in X$ and $y, \tilde{y} \in Y$ is a sequence of coordinates: $x = \{x_\alpha\}_{\alpha \in A}$ and $y = \{y_\beta\}_{\beta \in B}$. By the definition of the metric on the $l^\infty$-product as the supremum over all coordinates, we can fix indices $\alpha_* \in A$ and $\beta_* \in B$, for which the inequalities hold:
$$
d_X(x, \tilde{x}) < d_{X_{\alpha_*}}(x_{\alpha_*}, \tilde{x}_{\alpha_*}) + \e, \quad d_{X_{\alpha_{*}}}(x_{\alpha_{*}}, \tilde{x}_{\alpha_{*}}) \ne 0,
$$
$$
d_Y(y, \tilde{y}) < d_{Y_{\beta_*}}(y_{\beta_*}, \tilde{y}_{\beta_*}) + \e, \quad d_{Y_{\beta_{*}}}(y_{\beta_{*}}, \tilde{y}_{\beta_{*}}) \ne 0
$$
Note that if $x \neq \tilde{x}$, then for at least one coordinate the distance $d_{X_{\alpha_*}}(x_{\alpha_*}, \tilde{x}_{\alpha_*}) \neq 0$ (similarly for $y$ and $\tilde{y}$). We color the edge $\{(x, y), (\tilde{x}, \tilde{y})\}$ in the color encoded by the pair of indices $(\alpha_*, \beta_*)$.

Since $\#A \le \lambda$ and $\#B \le \lambda$, the cardinality of the set of colors does not exceed $\lambda \times \lambda = \lambda$.

Since $\#R \ge (2^\lambda)^+$, while there are only $\lambda$ colors in total, we are in the setting of the Erd\H{o}s--Rado theorem. According to it, the graph $R$ contains a monochromatic clique $R_1 \subset R$ of cardinality $\#R_1 = \lambda^+$, all of whose edges are colored in the same color $(\alpha_0, \beta_0)$. Thus, for any two distinct pairs $(x, y), (\tilde{x}, \tilde{y}) \in R_1$ the inequalities hold simultaneously:
$$
0 \ne d_{X_{\alpha_0}}(x_{\alpha_0}, \tilde{x}_{\alpha_0}) \le d_X(x, \tilde{x}) < d_{X_{\alpha_0}}(x_{\alpha_0}, \tilde{x}_{\alpha_0}) + \e,
$$
$$
0 \ne d_{Y_{\beta_0}}(y_{\beta_0}, \tilde{y}_{\beta_0}) \le d_Y(y, \tilde{y}) < d_{Y_{\beta_0}}(y_{\beta_0}, \tilde{y}_{\beta_0}) + \e.
$$

Let $p_{\alpha_{0}} \: X \times Y \to X_{\alpha_{0}}$ and $p_{\beta_{0}} \: X \times Y \to Y_{\beta_{0}}$ be the canonical projections.
We now define the subspaces $K \subset X_{\alpha_{0}}$ and $L \subset Y_{\beta_{0}}$ as follows:
$$
K = p_{\alpha_0}(R_1) \subset X_{\alpha_0} \quad \text{and} \quad L = p_{\beta_0}(R_1) \subset Y_{\beta_0}.
$$
By construction, the subset $R_1 \subset K \times L$ restricted to the clique is automatically a correspondence between the spaces $K$ and $L$ (since $K$ and $L$ are exactly the complete images of the projections of $R_1$). We estimate its distortion $\dis(R_1)$ as a correspondence $R_1 \in \cR(K, L)$. For any two pairs $(x,y), (\tilde{x}, \tilde{y}) \in R_1$ by the triangle inequality we have:
\begin{multline*}
\bigl|d_{X_{\alpha_0}}(x_{\alpha_0}, \tilde{x}_{\alpha_0}) - d_{Y_{\beta_0}}(y_{\beta_0}, \tilde{y}_{\beta_0})\bigr| \le \\
\le \bigl|d_{X_{\alpha_0}}(x_{\alpha_0}, \tilde{x}_{\alpha_0}) - d_X(x, \tilde{x})\bigr| + \bigl|d_X(x, \tilde{x}) - d_Y(y, \tilde{y})\bigr|
+ \bigl|d_Y(y, \tilde{y}) - d_{Y_{\beta_0}}(y_{\beta_0}, \tilde{y}_{\beta_0})\bigr|.
\end{multline*}
The first and third terms, by the monochromaticity of our clique, are strictly less than $\e$. The second term by definition does not exceed the distortion of the correspondence $\dis(R)$. Combining these estimates, we obtain:
$$
\dis(R_1) \le \dis(R) + 2\e < 2d_{GH}(X, Y) + 3\e.
$$
On the other hand, by the definition of the Gromov--Hausdorff distance between the spaces $K$ and $L$, its doubled value does not exceed the distortion of any correspondence between them, that is, $2d_{GH}(K, L) \le \dis(R_1)$. Hence we have:
$$
2d_{GH}(K, L) < 2d_{GH}(X, Y) + 3\e.
$$
Since the subspaces $K \subset X_{\alpha_0}$ and $L \subset Y_{\beta_0}$ have cardinality exactly $\lambda^+$, we can rewrite this inequality by passing to the infimum over all pairs of subsets of fixed cardinality $\lambda^+$ and over all pairs of indices $(\alpha, \beta)$:
$$
2d_{GH}(X, Y) > 2 \inf_{\alpha, \beta} \inf \Bigl\{ d_{GH}(K, L) \Bigm| K \subset X_\alpha, \, L \subset Y_\beta, \, \#K = \lambda^+, \, \#L = \lambda^+ \Bigr\} - 3\e.
$$
Letting $\e \to 0$, we arrive at the required inequality.
\end{proof}

\begin{rk}
Note that the requirements $\sup_{\alpha \in A} \diam(X_{\alpha}) < \infty$ and $\sup_{\beta \in B} \diam(Y_{\beta}) < \infty$ in the theorem can be weakened. It is enough to require that all spaces except possibly finitely many of them possess this property.
\end{rk}

\begin{examp}
We show how the previous theorem can be used to obtain a bound for the Gromov--Hausdorff distance in a certain special case.

Consider families of nonempty bounded metric spaces $\{X_\alpha\}_{\alpha \in A}$ and $\{Y_\beta\}_{\beta \in B}$ with cardinalities of the index sets $\#A \le \lambda$ and $\#B \le \lambda$, where $\lambda$ is an infinite cardinal number.
As before, we set $X = (l^{\infty}) \prod_{\alpha \in A} X_{\alpha}, \ Y = (l^{\infty}) \prod_{\beta \in B} Y_{\beta}$.
Suppose that the cardinality condition holds for at least one of the $l^\infty$-products $X$ or $Y$: $\max\{\#X, \#Y\} \ge (2^\lambda)^+$.

Suppose that each of the spaces $Z \in \{X_\alpha\}_{\alpha \in A} \cup \{Y_\beta\}_{\beta \in B}$ satisfies the following condition: for any subset $W \subset Z$ of cardinality $\#W = \lambda^+$ the equality $\diam(W) = \diam(Z)$ holds. In particular, as such spaces one can consider $\Delta_{n}(a)$ (discrete spaces with equal pairwise distances) of sufficiently large cardinality, in which the diameter of any subset of two or more points equals the diameter of the whole space.

By the theorem proved above, for any subspaces $K \subset X_\alpha$ and $L \subset Y_\beta$, having cardinality $\max\{\#K, \#L\} = \lambda^+$, the standard lower bound via the difference of diameters holds:
$$
2d_{GH}(K, L) \ge \bigl| \diam(K) - \diam(L) \bigr| = \bigl| \diam(X_\alpha) - \diam(Y_\beta) \bigr|.
$$
Then, passing to the infimum over all such subspaces and pairs of indices $(\alpha, \beta)$, from the theorem proved above we immediately obtain the following bound:
$$
2d_{GH}(X, Y) \ge \inf_{\alpha, \beta} \bigl| \diam(X_\alpha) - \diam(Y_\beta) \bigr|.
$$
Note that the quantity on the right-hand side of the inequality is the ``naive'' distance between the sets $\{ \diam(X_{\alpha}) \mid \alpha \in A\}$ and $\{\diam(Y_{\beta}) \mid \beta \in B\}$ in $\R_{+}$.
\end{examp}

The proof of the following theorem is completely analogous to the proof of the preceding one (the Ramsey theorem is used instead of the Erd\H{o}s--Rado theorem).

\begin{thm}
Let $N$ be a positive integer. Let $\{X_1, \dots, X_N\}$ and $\{Y_1, \dots, Y_N\}$ be two finite families of nonempty bounded metric spaces, and let $X = (l^\infty) \prod_{n=1}^N X_n$ and $Y = (l^\infty) \prod_{n=1}^N Y_n$ be their Cartesian products with the $l^\infty$-metric. Suppose that at least one of the spaces $X$ or $Y$ is an infinite set \rom(that is, $\max\{\#X, \#Y\} \ge \aleph_{0}$\rom). Then the following lower bound for the Gromov--Hausdorff distance holds\/\rom:
$$
d_{GH}(X, Y) \ge \inf_{n, m} \inf \Bigl\{ d_{GH}(K, L) \Bigm| K \subset X_n, \, L \subset Y_m, \, \max\{\#K, \#L\} = \aleph_{0} \Bigr\}.
$$
\end{thm}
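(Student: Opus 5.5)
The argument will follow the proof of the preceding theorem step by step, with Ramsey's theorem $\aleph_{0} \rightarrow (\aleph_{0})_{m}$ (for $m = N^{2}$) in place of the Erd\H{o}s--Rado theorem. We may assume $d_{GH}(X, Y) < \infty$. Fix $\e > 0$ and choose a correspondence $R \in \cR(X, Y)$ with $\dis(R) < 2d_{GH}(X, Y) + \e$. Since the projections of $R$ are surjective and one of $X, Y$ is infinite, $R$ is infinite. Choose a countably infinite subset $R' \subset R$; pick it so that its projection to the infinite factor (say $X$) is injective, by choosing pairs with pairwise distinct first coordinates. This will be needed at the end for a cardinality count.

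Regard $R'$ as the vertex set of a countable complete graph. Color the edge between distinct pairs $(x, y), (\tilde{x}, \tilde{y})$ by a pair of indices $(n_{*}, m_{*}) \in \{1, \dots, N\}^{2}$. Since the products are finite, the suprema are maxima, so we can take $n_{*}$ with $d_{X}(x, \tilde{x}) = d_{X_{n_{*}}}(x_{n_{*}}, \tilde{x}_{n_{*}})$, and similarly $m_{*}$ for $y, \tilde{y}$. This gives an exact equality, not just up to $\e$. If $x \ne \tilde{x}$, this coordinate distance is nonzero. There are at most $N^{2}$ colors, so Ramsey's theorem gives an infinite monochromatic clique $R_{1} \subset R'$ of color $(n_{0}, m_{0})$. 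Put $K = p_{n_{0}}(R_{1}) \subset X_{n_{0}}$ and $L = p_{m_{0}}(R_{1}) \subset Y_{m_{0}}$. Then $R_{1}$ is a correspondence between $K$ and $L$. By monochromaticity, each distance in $K$ or $L$ between projected points equals the corresponding distance in $X$ or $Y$. Hence $\dis(R_{1}) \le \dis(R) < 2d_{GH}(X, Y) + \e$, and so $2d_{GH}(K, L) < 2d_{GH}(X, Y) + \e$.

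The delicate point is the cardinality condition $\max\{\#K, \#L\} = \aleph_{0}$. Since $R_{1}$ is countable, both $K$ and $L$ are at most countable. For the lower bound, note that distinct $x, \tilde{x}$ among the first coordinates of $R_{1}$ (distinct by the choice of $R'$) have $d_{X_{n_{0}}}(x_{n_{0}}, \tilde{x}_{n_{0}}) = d_{X}(x, \tilde{x}) > 0$. So $p_{n_{0}}$ is injective on these points and $K$ is countably infinite. If instead $Y$ is the infinite factor, argue symmetrically and get $\#L = \aleph_{0}$. In either case $\max\{\#K, \#L\} = \aleph_{0}$.

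Finally, take the infimum over all admissible $K, L$ and over all pairs $(n, m)$ and let $\e \to 0$. This gives the stated bound.
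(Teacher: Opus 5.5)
Your proof is correct and is essentially the paper's argument: apply Ramsey's theorem to the complete graph on a correspondence $R$ with $\dis(R) < 2d_{GH}(X,Y)+\e$, color edges by pairs of coordinate indices, and project a monochromatic infinite clique to $K \subset X_{n_0}$, $L \subset Y_{m_0}$. Your refinements (exact maxima, and restricting to $R'$ with pairwise distinct coordinates in the infinite factor) close a point the paper glosses over: two distinct pairs of $R$ may share a coordinate, so the paper's coloring by a coordinate with nonzero distance can fail, and its cardinality claim is left unjustified. Your choice of $R'$ is exactly what yields $\max\{\#K,\#L\}=\aleph_{0}$.
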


 \end{document}